\definecolor{MyLinkColor}{rgb}{0,0,0.4}
\newcommand{\id}{\mathop{\rm id}\nolimits}
\newcommand{\im}{\mathop{\rm Im}\nolimits}
\newcommand{\tr}{\mathop{\rm tr}\nolimits}
\newcommand{\Kern}{\mathop{\rm Ker}\nolimits}
\newcommand{\spa}{\mathop{\rm span}\nolimits}
\newcommand{\sign}{\mathop{\rm sign}\nolimits}
\newcommand{\0}{\Omega}
\newcommand{\e}{\varepsilon}
\newcommand{\p}{\partial}
\newcommand{\w}{\widetilde}
\newcommand{\ov}{\overline}
\newcommand{\wh}{\widehat}
\newcommand{\G}{\Gamma}
\newcommand{\A}{\mathcal{A}}
\newcommand{\B}{\mathcal{B}}
\newcommand{\cO}{\mathcal{O}}
\newcommand{\cF}{\mathcal{F}}
\newcommand{\cG}{\mathcal{G}}
\newcommand{\kL}{\mathcal{L}}
\newcommand{\kS}{\mathcal{S}}
\newcommand{\T}{\mathcal{T}}
\newcommand{\U}{\mathcal{U}}
\newcommand{\V}{\mathcal{V}}
\newcommand{\W}{\mathcal{W}}
\newcommand{\R}{\mathbb{R}}
\newcommand{\s}{\mathbb S}
\newcommand{\N}{\mathbb{N}}
\newcommand{\Z}{\mathbb{Z}}
\newtheorem{thm}{Theorem}[section]
\newtheorem{lemma}[thm]{Lemma}
\theoremstyle{remark} 
\newtheorem{rem}[thm]{Remark}
\numberwithin{equation}{section}
\title[Generalised Rayleigh-Taylor condition]{A generalised Rayleigh-Taylor condition for the Muskat problem}
\subjclass[2010]{35B35;  	35B36;   35K55;  35R37}
\keywords{Muskat problem; Rayleigh-Taylor condition;  stability; bifurcation theory; finger-shaped equilibria}
\author[Escher]{Joachim Escher}
\address{Institut f{\"u}r Angewandte Mathematik, Leibniz Universit{\"a}t Hannover, Welfengarten~1, 30167 Hannover, Germany.}
\email{escher@ifam.uni-hannover.de}
\author[Matioc]{Anca-Voichita Matioc}
\address{Institut f{\"u}r Angewandte Mathematik, Leibniz Universit{\"a}t Hannover, Welfengarten~1, 30167 Hannover, Germany.}
\email{matioca@ifam.uni-hannover.de}
\author[Matioc]{Bogdan--Vasile Matioc}
\address{Institut f{\"u}r Angewandte Mathematik, Leibniz Universit{\"a}t Hannover, Welfengarten~1, 30167 Hannover, Germany.}
\email{matioc@ifam.uni-hannover.de}
\begin{document}

\begin{abstract}
In this paper we consider the evolution of two fluid phases in a porous medium.
The fluids are separated from each other and also the  wetting phase from air by  interfaces which evolve in time.
We reduce the problem to an abstract evolution equation. A generalised Rayleigh-Taylor condition characterizes the parabolicity regime of the problem and allows us to  establish  a general well-posedness result and to study  stability properties of  flat steady-states.  
When considering surface tension effects at the interface between the fluids and if the more dense fluid lies above, we find bifurcating finger-shaped equilibria which are all unstable.
\end{abstract}

\maketitle
\section{Introduction}

The Muskat problem  is a widely used model for the 
intrusion of water into oil sand.
A linear analysis was performed in \cite{R, ST, T} where a relation, the so-called Rayleigh-Taylor condition, was found
to determine two regimes for the problem: a stable regime,  when a flat interface is stable under small deviations,  and an unstable one, when fingering occurs.

Nonetheless, existence and uniqueness of classical solutions has been  firstly  proven
in \cite{Y1} by using Newton's iteration method.
In the last decade the problem has received more interest and was studied by means of complex analysis \cite{SCH},
energy estimates \cite{Ambr, CCG1, CCG2, CG},  power series expansions \cite{FT}, or abstract parabolic theory \cite{EM12}.
These different approaches cover a wide spectrum of questions related to the Muskat problem: local well-posedness, global existence of solution, singular solutions, stability properties
of equilibria.  

It is worth noticing that all these papers mentioned above consider the situation when there is only one moving boundary, namely the one separating the fluids.
Either one prescribes boundary conditions at two boundaries which are kept fixed during the flow or so-called far-field boundary condition are imposed.
This setting  corresponds to an abstract equation with  only one unknown - the  interface between the fluids.
In the present paper we consider the more involved situation when there are two moving boundaries,  one separating the two fluids and one separating the wetting phase from air (assumed to be at uniform pressure equal to zero). 
The fluids are located in a porous medium (or a vertical Hele-Shaw cell) and  are assume to fill together with the dry phase (air) the entire void medium.
Moreover, we incorporate gravity and  viscosity  effects into the modeling as well as surface tension  forces at both interfaces.
The invertibility of a bounded  operator  permits us to  re-write the problem as an abstract  non-autonomous evolution equation
\[
\p_tZ=\Phi(t,Z),\qquad Z(0)=Z_0,
\]
where the variable $Z$  parametrises both  unknown interfaces.
The temporal variable $t$ is induced into the problem by the boundary condition $b$ for the pressure on the bottom of the cell.
For this problem we find  a generalised Rayleigh-Taylor condition in terms only  of the boundary data  $b$, the viscosities $\mu_\pm$, and densities $\rho_\pm$ of the fluids of the following form
\begin{equation}\label{eq:RayTay}
b\mu_++g\rho_+\mu_->0\qquad \text{and} \qquad\frac{\mu_+-\mu_-}{\mu_++\mu_-}(b-g\rho_+)+g(\rho_+-\rho_-)<0,
\end{equation}
which determines the parabolic character of the problem in the absence of surface tension effects.
When including surface tension forces at both interfaces we may drop condition \eqref{eq:RayTay}.  
We steadily use in this paper the subscript $-$ for the fluid on the bottom of the cell and $+$ for that above.
After showing that the Fr\'echet derivative $\p_Z\Phi(0)$ generates a strongly continuous and analytic semigroup, parabolic theory provides 
local well-posedness  of the problem and the principle of linearised stability may be applied to study the stability properties of the unique flat equilibrium which is determined for 
a fixed amount of fluid $+$ (this quantity is preserved by the flow) and a certain constant boundary data.

When considering surface tension effects at the interface between the fluids and the more dense fluid lies above we re-discover the  
global bifurcation branches obtained in \cite{EEM3, EM12}
which consist only of  finger-shaped equilibria of the Muskat problem.
The exchange of stability theorem due to Crandall and Rabinowitz \cite{CR3} applies to this particular problem and we show that all small 
equilibria are unstable.

The outline of the paper is as follows: we describe in Section 2 the mathematical model and present the main  results. 
Section 3 is dedicated to the proof of the well-posedness result Theorem \ref{T:P1}, and in the subsequent section we 
analyse the stability properties of the unique flat equilibrium as stated in Theorem \ref{T:P2}.
In Section 5 we prove our third main result, Theorem \ref{T:P3}.
The  calculations leading to the representation of $\p_Z\Phi(0)$ as a Fourier multiplication operator are done in the Appendix.

\section{The mathematical model and the main results}
Let us start this section by presenting the mathematical model of the setting described in the introduction.
Given $m\in\N$ and $\beta\in(0,1)$ the small H\"older space $h^{m+\beta}(\s)$ stands for the closure of the smooth functions $C^{\infty}(\s)$ in $C^{m+\beta}(\s).$
We let $\s$ denote the unit circle and functions on $\s$ are identified with $2\pi$-periodic functions on $\R.$
For later purposes we define  $h^{m+\beta}_{e}(\s)$ as the subspace of $h^{m+\beta}(\s)$ consisting only of even functions, 
 $h^{m+\beta}_{0}(\s)$ is the subspace of $h^{m+\beta}(\s)$ consisting only of functions with integral mean zero, and 
 $h^{m+\beta}_{0,e}(\s):= h^{m+\beta}_{0}(\s)\cap h^{m+\beta}_{e}(\s).$
Furthermore, we define the set of admissible functions to be
\[
\U:=\{f\in C^2(\s)\,:\, |f|<1/2\},
\]
Each pair  $(f,h)\in \U^2$ determines two open and simply connected subsets of the porous medium, seen as $\s\times(-1,2)\subset\s\times\R$, as follows:
\begin{align*}
&\Omega(f):=\{(x,y)\,:\, -1<y<f(x)\},\\[1ex]
&\Omega(f,h):=\{(x,y)\,:\, f(x)<y<1+h(x)\}.
\end{align*}
Let $T>0$ and  $(f,h) :[0,T]\to \U^2$ be given such that,  at each time $t\in[0,T],$  
the fluid $-$ is located at $\0(f(t))$ and  the fluid $+$  at $\0(f(t),h(t))$ (see Figure \ref{F:1}). 
\begin{figure}
\includegraphics[clip=true, angle=0,  trim = 0 -10 0 0, width=0.55\linewidth]{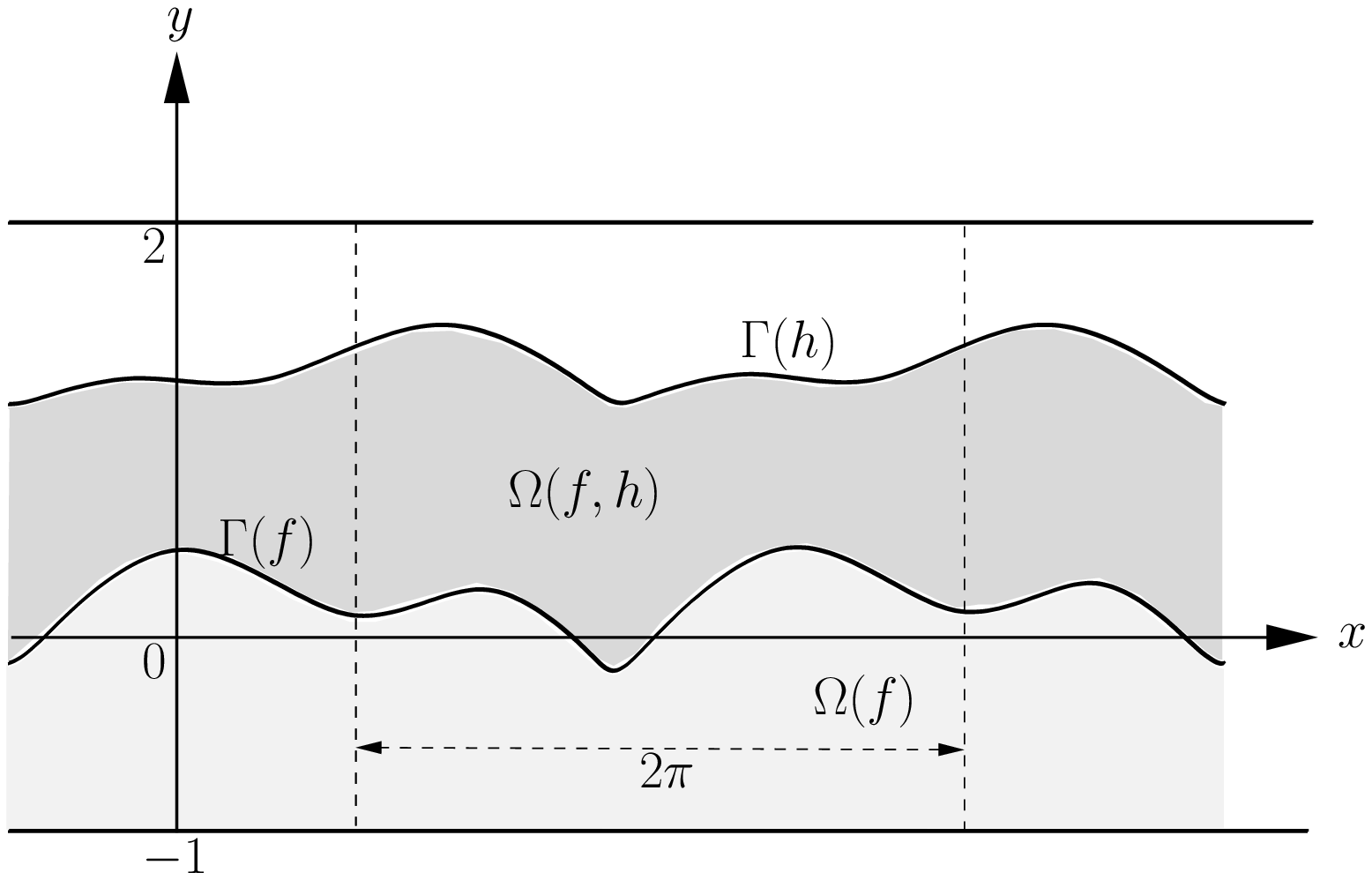}
\caption{The mathematical setting}
\label{F:1}
\end{figure}
The two fluids are assumed to be of Newtonian type and incompressible, and both interfaces  are supposed to move along with the fluids.
The problem is governed by the 
following system of partial differential equations:
\begin{equation}\label{eq:S}
\left\{\begin{array}{rllllll}
\Delta u_+&=&0&\text{in}& \Omega(f,h), \\[1ex]
\Delta u_-&=&0&\text{in}& \Omega(f), \\[1ex]
{\p_th+\frac{k\sqrt{1+h'^2}}{\mu_+}\p_\nu u_+}&=&0 &\text{on}& \Gamma(h),\\[1ex]
u_+&=&g\rho_+(1+ h)-\gamma_{d}\kappa_{\G(h)}&\text{on}&\Gamma(h),\\[1ex]
u_-&=&b&\text{on}&\Gamma_{-1},\\[1ex]
u_+-u_-&=&g(\rho_+-\rho_-)f+\gamma_{w}\kappa_{\G(f)}&\text{on}& \Gamma(f),\\[1ex]
{\p_tf+\frac{k\sqrt{1+f'^2}}{\mu_\pm}\p_\nu u_\pm}&=&0 &\text{on}& \Gamma(f),\\[1ex]
f(0)&=&f_0, \\[1ex]
h(0)&=&h_0
\end{array}
\right.
\end{equation} 
for $t\in[0,T]$, where $(f_0,h_0)\in\U^2$ determines the initial domains occupied by the fluids.
We used the variable $f$ for parametrising
the interface $\Gamma(f):=[y=f(x)]$ between the two fluids and $\Gamma(h):=[y=1+h(x)]$ separates the fluid $+$ from air.
The unit normal $\nu$ at $\Gamma(f)$ [resp. $\G(h)$] is chosen such that, if $\tau $ is the tangent,  the orthonormal basis $\{\tau,\nu\}$ has positive orientation. 
We also write $\kappa_{\G(f)}$ and $\kappa_{\G(h)}$ for the curvature of $\G(f)$ and $\G(h)$, respectively.
Moreover $\gamma_d $ [resp. $\gamma_w$] is the surface tension coefficient of the interface separating the fluids from air [resp. the fluids]. 
 
The potentials $u_\pm$ incorporate both pressure and gravity force $u_\pm:=p_\pm +g\rho_\pm y$, with $g$ the gravity constant.
 The velocity fields $\vec v_\pm$,  which satisfy Darcy's law 
\[
\vec v_\pm=-\frac{k}{\mu_\pm}\nabla u_\pm,
\]
are presupposed to be equal on the boundary separating the fluid phases.
Hereby,  $k$ stands for  the permeability of the porous medium.
On the fixed  boundary $\Gamma_{-1}:=\s\times\{-1\}$  we prescribed the value of the velocity potential $u_-$.
For a precise deduction of \eqref{eq:S} we refer to \cite{EM1, EM12, Y1}.

Let $\alpha\in(0,1)$ be fixed for the following.
A pair $(f,h, u_+,u_-)$ is called {\em classical H\"older solution} of \eqref{eq:S} if
\begin{align*}
&(f, h)\in C([0,T],\V)\cap C^{1}([0,T],(h^{1+\alpha}(\s))^2),\\[1ex]
&\text{$u_+(t)\in\mbox{\it buc}^{2+\alpha}(\0(f(t),h(t)))$ and $u_-(t)\in\mbox{\it buc}^{2+\alpha}(\0(f(t)))$ for $ t\in[0,T],$}
\end{align*}
and if $(f,h, u_+,u_-)$ satisfies the equations of \eqref{eq:S} pointwise. 
We defined  $\V:=\V_1\times\V_2$ to be  the subset of $\U^2$ given by
\begin{align*}
\V_1&:=\{f\in h^{2+2\sign(\gamma_w)+\alpha}(\s)\,:\, f\in\U\},\\
\V_2&:=\{h\in h^{2+2\sign(\gamma_d)+\alpha}(\s)\,:\, h\in\U\},
\end{align*}
where $\sign(0)=0$ and $\sign(\gamma)=1$ for $\gamma>0.$
The space $\mbox{\it buc}^{2+\alpha}(\0(f))$  is defined as closure of the smooth functions with bounded and  uniformly continuous derivatives 
$\mbox{\it BUC}\,^{ \infty}(\0(f))$ in $\mbox{\it BUC}\,^{ 2+\alpha}(\0(f))$.
The space   $\mbox{\it buc}^{2+\alpha}(\0(f,h))$ is defined similarly.
Moreover, since the potentials $u_\pm$ are determined,  when knowing  $(f,h)$, as solutions of elliptic problems (see Section 3) we   also refer to   $(f,h)$ to be the solution of \eqref{eq:S}.
The first main result of this paper states:

\begin{thm}\label{T:P1} Let $(\gamma_{d},\gamma_{w})\in(0,\infty)^2$  be given. 

There exist an open neighbourhood  $\cO$ of the zero function in  $\left(h^{4+\alpha}(\s)\right)^2$ 
such that for all $(f_0,h_0)\in\cO$ and $b\in C([0,\infty),h^{2+\alpha}(\s) )$ problem 
\eqref{eq:S} possesses a unique classical H\"older solution $\cF(\cdot;(f_0,h_0))$ defined  on a maximal time interval $[0,T(f_0,h_0))$  and which satisfies $\cF([0,T(f_0,h_0));(f_0,h_0))\subset\cO.$
The mapping $\{(t,f_0,h_0)\,:\,\text{$(f_0,h_0)\in\cO, t\in(0,T(f_0,h_0))$}\} \to\left(h^{4+\alpha}(\s)\right)^2$
\[
(t,f_0,h_0)\mapsto \cF(t;(f_0,h_0))
\]
has the same regularity as $b$ has.
\end{thm}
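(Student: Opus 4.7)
The plan is to reduce \eqref{eq:S} to a non-autonomous abstract quasilinear parabolic equation for the unknown $Z:=(f,h)$ in little-H\"older scales, and then to invoke the standard theory for parabolic evolution equations in continuous interpolation spaces. Given $Z\in\V$ and data $b(t)$, the potentials $u_\pm$ are recovered as solutions of Dirichlet problems for the Laplacian on $\0(f)$ and $\0(f,h)$ with boundary values involving $b$, $h$, and the curvatures $\kappa_{\G(f)}$, $\kappa_{\G(h)}$. Pulling these domains back to the fixed reference strip $\s\times(-1,2)$ via a diffeomorphism depending smoothly on $Z$, Schauder theory provides smooth dependence of $u_\pm$ on $Z\in(h^{4+\alpha}(\s))^2$, and hence of the Dirichlet-to-Neumann traces $\p_\nu u_\pm|_{\G(f)}$ and $\p_\nu u_+|_{\G(h)}$. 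On $\G(f)$ the kinematic condition couples the normal derivatives of both potentials; inverting the resulting bounded linear operator (as announced in the introduction) recasts \eqref{eq:S} in the form
\begin{equation*}
\p_tZ=\Phi(t,Z),\qquad Z(0)=Z_0,
\end{equation*}
where $\Phi:\R\times\cO\to (h^{1+\alpha}(\s))^2$ is smooth in $Z$, inherits the $t$-regularity of $b$, and $\cO$ is an open neighbourhood of $0$ in $(h^{4+\alpha}(\s))^2$ chosen so that $Z\in\cO$ implies $(f,h)\in\V$.

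The crucial analytic step is to show that $A:=\p_Z\Phi(0,0)$ is, up to a compact perturbation, an elliptic operator of order three on $\s$, so that $-A$ is sectorial on the base space $E_0:=(h^{1+\alpha}(\s))^2$ with dense domain $E_1:=(h^{4+\alpha}(\s))^2$. The calculations relegated to the Appendix identify $A$ as a $2\times 2$ Fourier multiplication operator whose principal part is diagonal in the high-frequency limit (the two interfaces decouple exponentially in $|n|$) with diagonal entries asymptotic to $-c_w\gamma_w |n|^3$ and $-c_d\gamma_d |n|^3$ for strictly positive constants $c_w, c_d$ depending on $k$ and $\mu_\pm$. Since $\gamma_w,\gamma_d>0$, the Fourier characterisation of generators on little-H\"older spaces yields $A\in\kH(E_1,E_0)$, i.e.\ $A$ generates a strongly continuous analytic semigroup on $E_0$.

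With the generator property of $A$ in hand, Theorem \ref{T:P1} follows from the general well-posedness theory for quasilinear non-autonomous parabolic problems in continuous interpolation spaces, in the framework of Amann. That theory produces, for each $Z_0$ in a sufficiently small open neighbourhood $\cO$ of $0$ in $E_1$, a unique maximal classical solution $\cF(\cdot;Z_0):[0,T(Z_0))\to\cO$ of the abstract Cauchy problem, and the flow map $(t,Z_0)\mapsto\cF(t;Z_0)$ inherits the regularity of the non-autonomous term $b$. Reconstructing $u_\pm$ from $Z$ via the elliptic Dirichlet problems then yields the classical H\"older solution of \eqref{eq:S} with all the asserted regularity.

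The main obstacle is the verification of sectoriality of $A$. Because the kinematic equation on $\G(f)$ couples both potentials, one must simultaneously diagonalise at the Fourier level the operator that decouples $\p_\nu u_+|_{\G(f)}$ from $\p_\nu u_-|_{\G(f)}$ and control the full $2\times 2$ symbol matrix, verifying that its principal part is third order with strictly positive real part and that the subprincipal contributions do not destroy sectoriality. It is precisely this calculation, carried out in the Appendix, that allows one to dispense with the generalised Rayleigh-Taylor condition \eqref{eq:RayTay} in the presence of surface tension at both interfaces.
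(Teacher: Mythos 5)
Your proposal is correct and follows essentially the same strategy as the paper: reduce \eqref{eq:S} to an abstract evolution equation $\p_tZ=\Phi(t,Z)$ on fixed reference domains, verify via the Appendix's explicit Fourier-multiplier symbols that the linearisation $\p_Z\Phi(0)$ is sectorial (the paper also exploits the exponential decay of the off-diagonal symbols, e.g.\ $\lambda^h_1(m)\sim |m|^3/\sinh|m|$, to reduce generation to the diagonal entries via interpolation and a perturbation argument from \cite{Am}), and then invoke abstract parabolic theory. The only inessential deviation is that you attribute the abstract well-posedness result to Amann's quasilinear framework, whereas the paper applies the fully nonlinear non-autonomous result \cite[Theorem~8.4.1]{L}; otherwise the argument is the same.
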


\begin{rem}\label{R:1} The conclusion of Theorem \ref{T:P1} remains valid if $\gamma_{d}=0$ or $\gamma_{w}=0$ with the following modifications:
 if $\gamma_{d}=\gamma_{w}=0$ we have to replace $\left(h^{4+\alpha}(\s)\right)^2$ by $\left(h^{2+\alpha}(\s)\right)^2$ and require that
$b\in C([0,\infty), c+\cO_0) $ where $\cO_0$ is a small neighbourhood of the zero function in $h^{2+\alpha}(\s)$ and $c\in\R$ satisfies
\begin{align}\label{eq:cond1}
&c\mu_++g\rho_+\mu_->0,\\[1ex]
\label{eq:cond2}
&\frac{\mu_+-\mu_-}{\mu_++\mu_-}(c-g\rho_+)+g(\rho_+-\rho_-)<0.
\end{align}
When $(\gamma_{d},\gamma_{w})\in \{0\}\times (0,\infty)$ [resp. $(\gamma_{d},\gamma_{w})\in  (0,\infty)\times \{0\}$]
 we replace $\left(h^{4+\alpha}(\s)\right)^2$ by $h^{4+\alpha}(\s)\times h^{2+\alpha}(\s)$
[resp. $h^{2+\alpha}(\s)\times h^{4+\alpha}(\s)$] and request that the constant $c$ satisfies only equation \eqref{eq:cond1} [resp. eq. \eqref{eq:cond2}].
\end{rem}

Relation  \eqref{eq:cond1} is  a generalisation of the positive pressure  condition imposed   in \cite{EM1, EM7, EM5} 
to ensure well-posedness and stability of the one-phase Hele-Shaw problem without surface tension. 
Indeed, if the fluids have the same densities and viscosity, \eqref{eq:cond1} re-writes $c+g\rho_+>0$ which is, up to a scaling, the same condition as in \cite{EM1, EM7, EM5}. 
Moreover, it turns out that the Muskat problem without surface tension effects studied in \cite{CG, EM12, Yi2} is similar to our problem if  $\gamma_d>0.$ 
Indeed, we have:
\begin{lemma}\label{L:1} The volume of fluid $+$ is preserved by the solutions of \eqref{eq:S}. 
\end{lemma}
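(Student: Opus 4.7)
The plan is to express the volume of fluid $+$ as
\[
V(t)=\int_{\Omega(f(t),h(t))} 1\, dA=\int_{\s}\bigl(1+h(t,x)-f(t,x)\bigr)\,dx,
\]
and to show directly that $dV/dt\equiv 0$. Differentiating under the integral sign reduces the task to proving
\[
\int_{\s}\p_t h(t,x)\,dx=\int_{\s}\p_t f(t,x)\,dx.
\]

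To obtain this identity I would rewrite each integrand using the kinematic boundary conditions (the third and seventh equations of \eqref{eq:S}), taking in the seventh equation the $+$ subscript. Recalling that the arc-length element on a graph $y=\varphi(x)$ is $\sqrt{1+\varphi'^2}\,dx$, the factors $\sqrt{1+h'^2}$ and $\sqrt{1+f'^2}$ exactly convert the $dx$-integrals on $\s$ into surface integrals over the interfaces:
\[
\int_{\s}\p_t h\,dx=-\frac{k}{\mu_+}\int_{\G(h)}\p_\nu u_+\,d\sigma,\qquad
\int_{\s}\p_t f\,dx=-\frac{k}{\mu_+}\int_{\G(f)}\p_\nu u_+\,d\sigma.
\]

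Next, since $u_+$ is harmonic in the fluid-$+$ domain $\Omega(f,h)$, the divergence theorem yields
\[
0=\int_{\Omega(f,h)}\Delta u_+\,dA=\int_{\p\Omega(f,h)}\nabla u_+\cdot\nu_{\rm ext}\,d\sigma,
\]
where $\p\Omega(f,h)=\G(h)\cup\G(f)$ thanks to periodicity in $x$. The orientation convention stated after \eqref{eq:S} (rotate $\tau$ by $+\pi/2$ to get $\nu$) makes $\nu$ point upward on both graphs, hence $\nu_{\rm ext}=\nu$ on $\G(h)$ but $\nu_{\rm ext}=-\nu$ on $\G(f)$. Consequently
\[
\int_{\G(h)}\p_\nu u_+\,d\sigma=\int_{\G(f)}\p_\nu u_+\,d\sigma,
\]
which combined with the two displayed equalities above gives $\frac{d}{dt}\int_{\s}(h-f)\,dx=0$ and therefore $V(t)\equiv V(0)$.

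The computation is essentially routine once the volume is written as a graph integral and Darcy's law is read as a no-flux accountancy; the only point requiring a little care, and what I would flag as the main pitfall, is the sign bookkeeping for the two unit normals, since $\nu$ is outward to $\Omega(f,h)$ along $\G(h)$ but inward along $\G(f)$. All the regularity needed to apply the divergence theorem and to differentiate under the integral is supplied by the definition of a classical H\"older solution.
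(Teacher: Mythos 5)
Your proof is correct and complete. The paper itself does not spell out an argument for Lemma~\ref{L:1}; it merely refers to \cite[Lemma~3.1]{EM7}, so there is no internal proof to compare against line by line. What you have written is, however, exactly the natural self-contained argument: write the volume as $\int_{\s}(1+h-f)\,dx$, convert $\int_\s\p_t h\,dx$ and $\int_\s\p_t f\,dx$ into flux integrals of $u_+$ over $\G(h)$ and $\G(f)$ via the kinematic conditions (taking the $+$ branch of the seventh equation), and then invoke the divergence theorem for the harmonic function $u_+$ on $\Omega(f,h)$, with the sign bookkeeping for the normal $\nu$ versus the outward normal $n$ on the two components of $\partial\Omega(f,h)$. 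Note that precisely this sign/orientation computation, namely $n=\nu$ on the upper interface and $n=-\nu$ on $\G(f)$, appears verbatim in the paper's own proof of Lemma~\ref{L:Le1}, so your argument is entirely consonant with the paper's methods; it simply makes explicit what the authors left to the reference. The regularity required to differentiate under the integral and apply the divergence theorem is indeed guaranteed by the definition of a classical H\"older solution ($u_+(t)\in\mbox{\it buc}^{2+\alpha}(\Omega(f(t),h(t)))$ and $(f,h)\in C^1([0,T],(h^{1+\alpha}(\s))^2)$), as you observe.
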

\begin{proof} The proof is similar to that of \cite[Lemma 3.1]{EM7}.
\end{proof}

In order to establish similarity between our problem when $\gamma_d>0$ and that in \cite{EM12, Yi2}, we determine a
special  solution of \eqref{eq:S} in the case when the volume of fluid $+$ is equal to $2\pi$, i.e.
\begin{equation}\label{eq:cond3}
\int_{\s}f_0-h_0\, dx=0.
\end{equation}
If  initially $f(0)=h(0)=f_0\in\R$  and $b$ depends only on time, 
 then 
\begin{equation}\label{eq:pro-3}
f'(t)=-\frac{kg\rho_-}{\mu_-}\frac{ f(t)+\frac{g\rho_+-b}{g\rho_-}}{f(t)+\frac{\mu_++\mu_-}{\mu_-}},\qquad f(0)=f_0,
\end{equation}
and, by Lemma \ref{L:KL}, $f(t)=h(t)$ as long as the solution exists.
If $\rho_+=\rho_-$ and $\mu_->\mu_+$ we obtain from \eqref{eq:cond2} that  if $\gamma_w=0$, then $b>g\rho_+$, 
thus $f'$ is positive if $f_0$ is close to zero, meaning that the more viscous fluid 
drives upwards the less viscous one in the medium.  
This  condition has been found also in \cite{EM12, Yi2} to guarantee well-posedness of the Muskat problem  studied therein.
Moreover, if the Atwood number 
\[
A_\mu:=(\mu_+-\mu_-)/(\mu_++\mu_-)
\]
 is zero,  then \eqref{eq:cond2} tells us that the more dense fluid must lay beneath in order  to guarantee well-posedness of \eqref{eq:S} when $\gamma_w=0,$
result  similar to that in \cite{CG, EM12}. 

Corresponding to the result in \cite{EM12}, where an optimal value for the normal
 velocity at which water may replace oil in the absence of surface tension effects was found,
 we obtain herein an optimal value for the pressure on the bottom of the medium:  
\begin{rem}
If the fluid below is water and that above oil, and we neglect the surface force at the interface between them, we find from \eqref{eq:cond2} an optimal value
\begin{equation}\label{eq:optimal}
p_{max}:=g(\rho_++\rho_-)-g(\rho_+-\rho_-)A_\mu^{-1}
\end{equation}
for the pressure on the bottom of the porous medium below which   water may drive upwards  oil in a stable regime (no fingering occurs).
\end{rem}
\begin{proof} Relation \eqref{eq:optimal} is obtained form \eqref{eq:cond2} in view of $u_-=p_--g\rho_-$ on $\Gamma_{-1}.$ 
The optimal value for the potential  $b$ is $b_{max}=p_{max}-g\rho_-,$ and if the boundary value $b$ is close to this value we find that the solutions of \eqref{eq:pro-3}
fulfill $f'>0,$ thus water drives  oil upwards.
This last assertion follows  from
\[
g\rho_+-b_{max}=g(\rho_+-\rho_-)A_\mu^{-1}<0
\]
since it is well-known \cite{CC} that $\rho_+<\rho_-$ and $\mu_+>\mu_-$ (oil is  less dense and more viscous than water). 
\end{proof}

We infer  from \eqref{eq:pro-3} that if $b= g\rho_+$ and $f_0=0$, then $f(t)=h(t)=0$ for all $t\geq0.$
Concerning the stability properties of the stationary solution  $(f,h)=(0,0)$, which is the unique flat stationary solution of \eqref{eq:S} for $b=g\rho_+$ and which satisfies \eqref{eq:cond3},
we state:
\begin{thm}\label{T:P2} Let $\gamma_d, \gamma_w\in[0,\infty)$. 
Then:
\begin{itemize}
\item[$(i)$] If $g(\rho_--\rho_+)+\gamma_w>0,$ then the flat equilibrium $(f,h)=(0,0)$ of \eqref{eq:S} is exponentially stable.
More precisely, there exists positive constants $M, \delta,$ and $\omega$ such that if $\|(f_0,h_0)\|_{h^{2+2\sign(\gamma_w)+\alpha}(\s)\times h^{2+2\sign(\gamma_d)+\alpha}(\s)}\leq\delta$
and $(f_0,h_0)$ satisfies  \eqref{eq:cond3}, then the solution $(f,h)$ of \eqref{eq:S} exists globally and 
\begin{align*}
&\|(f(t),h(t))\|_{h^{2+2\sign(\gamma_w)+\alpha}(\s)\times h^{2+2\sign(\gamma_d)+\alpha}(\s)}+\|(\p_tf(t),\p_th(t))\|_{(h^{1+\alpha}(\s))^2}\\[1ex]
&\phantom{doublespace}\leq Me^{-\omega t}\|(f_0,h_0)\|_{h^{2+2\sign(\gamma_w)+\alpha}(\s)\times h^{2+2\sign(\gamma_d)+\alpha}(\s)},\quad\forall t\geq0.
\end{align*}

\item[$(ii)$] If $g(\rho_--\rho_+)+\gamma_w<0,$ then $(f,h)=(0,0)$  is unstable.
\end{itemize}
\end{thm}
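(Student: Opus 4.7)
\medskip

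\textbf{Proof proposal.} With $b\equiv g\rho_+$ problem \eqref{eq:S} becomes autonomous and, by Theorem \ref{T:P1} and its formulation as $\p_tZ=\Phi(Z)$, the pair $Z=(f,h)=(0,0)$ is a stationary solution. The plan is to apply the principle of linearised stability/instability for parabolic evolution equations (in the spirit of Lunardi's book, or the version of Da Prato--Lunardi used, e.g., in \cite{EM7, EM12}) to the generator $\p_Z\Phi(0)$, which by the appendix of the paper is a Fourier multiplication operator. Since Lemma \ref{L:1} forces $\int_\s(f-h)\,dx$ to be conserved, the relevant phase space is the affine subspace $\{(f,h)\in\V:\int_\s(f-h)\,dx=0\}$, and the linearisation must be studied as an operator on this constrained space (equivalently, on the Fourier modes $k\in\Z$ with the $k=0$ component restricted to the one-dimensional subspace $\{(c,c):c\in\R\}$).

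The first step is to exploit the fact that $\p_Z\Phi(0)$ is diagonal with respect to the Fourier basis $\{e^{ikx}\}_{k\in\Z}$ of $\s$: on the $k$--th mode it acts as a $2\times 2$ matrix $\M_k$ whose entries are explicit functions of $|k|$, of the viscosities $\mu_\pm$ and densities $\rho_\pm$, and of the surface-tension coefficients $\gamma_{w},\gamma_{d}$ (these come from the Dirichlet--to--Neumann maps in the two strips $\Omega(0)$ and $\Omega(0,0)$, both of height $1$, whose symbols are $|k|\coth|k|$ and $|k|\tanh|k|$-type quantities). The second step is to compute $\tr \M_k$ and $\det \M_k$ and to use the Routh--Hurwitz criterion: the spectrum of $\M_k$ lies in $\{\re\lambda<0\}$ iff $\tr \M_k<0$ and $\det \M_k>0$.

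For large $|k|$, the leading contribution to $\det \M_k$ comes from the surface-tension terms and has a definite sign (stabilising whenever $\gamma_d>0$ and $g(\rho_--\rho_+)+\gamma_w>0$); the low modes $|k|\geq 1$ are then checked to produce the same sign of $\det \M_k$ using the monotonicity of $|k|\coth|k|$ and $|k|\tanh|k|$ in $|k|\geq 1$ together with the positivity condition on $g(\rho_--\rho_+)+\gamma_w$ (this is the quantity that governs the sign of the $\gamma_w$--free/capillary--free part of the determinant at $|k|=1$, precisely the critical mode). The key identification is: $g(\rho_--\rho_+)+\gamma_w$ is, up to a positive factor, the coefficient multiplying the ``unstable'' entry in $\M_k$ coming from the interface $\Gamma(f)$. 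From this one extracts a uniform spectral gap $\re\sigma(\p_Z\Phi(0))\leq-\omega<0$ on the mass--preserving subspace, which is exactly the quantitative input needed by the linearised stability theorem to yield the exponential decay estimate asserted in (i).

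For (ii) the strategy is reversed: under $g(\rho_--\rho_+)+\gamma_w<0$, the mode $k=\pm 1$ is the first to destabilise, and an explicit evaluation shows that $\det \M_{\pm1}<0$, so $\M_{\pm1}$ has one eigenvalue with strictly positive real part; the abstract instability theorem then yields instability of $(0,0)$. The main obstacle is not the abstract semigroup machinery (which is provided by the paper's functional analytic setup) but the sign analysis of $\det \M_k$: one has to carry out the $2\times2$ determinant computation for the explicit Fourier symbols from the appendix, isolate the combination $g(\rho_--\rho_+)+\gamma_w$ as the term whose sign is decisive at the low modes, and verify that the surface-tension contributions are of the right sign and sufficiently large at high modes to close the uniform spectral gap on the mass--preserving subspace. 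Once this sign/monotonicity analysis is in place, part (i) and part (ii) follow by the two standard halves of the linearised stability principle.
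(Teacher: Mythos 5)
Your proposal is essentially the same approach as the paper: reduce to a Fourier multiplier $2\times2$ block structure, impose the volume constraint from Lemma \ref{L:1} to kill the neutral zero eigenvalue at the $k=0$ mode, and decide the sign of the eigenvalues in each mode from the sign of the determinant, where the product $(g\rho_++\gamma_d m^2)\bigl[g(\rho_--\rho_+)+\gamma_w m^2\bigr]$ appears and the $m=1$ factor $g(\rho_--\rho_+)+\gamma_w$ is the decisive quantity; the paper implements the constraint via the change of unknown $\w f=f-h$ and Lemma \ref{L:Le1} rather than by restricting to the affine subspace, and then bounds $\Lambda_\pm(m)$ directly instead of invoking Routh--Hurwitz, but these are cosmetic differences. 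One small inaccuracy worth noting: you frame the high-mode stabilisation as coming from $\gamma_d>0$, but Theorem \ref{T:P2} allows $\gamma_d=0$; in the determinant formula the factor $g\rho_++\gamma_d m^2$ stays strictly positive even when $\gamma_d=0$, so no surface tension at $\Gamma(h)$ is needed for the sign analysis to close.
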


\begin{rem}\label{R:2} When we  study the  stability of equilibria  in Theorem \ref{T:P2} and Theorem \ref{T:P3} below we  fixed a volume of fluid $+$ equal to $2\pi,$
meaning that the initial data of \eqref{eq:S} are presupposed to satisfy \eqref{eq:cond3}.
This setting is imposed by Lemma \ref{L:1}, since the volume of fluid $+$ is preserved   by the solutions of \eqref{eq:S}.
\end{rem}
Theorem \ref{T:P2} is related to the exponential stability result established in \cite[Theorem 5.3]{EM12}
for the Muskat problem with only one free boundary and is  stronger than that in \cite{FT}, where only stability is shown.
Notice that if $\gamma_w=0,$ then the flat solution is always stable, since  $\rho_->\rho_+$  is exactly the condition \eqref{eq:cond2} which guarantees well-posedness of \eqref{eq:S}.
Concerning the unstable case, numerical experiments \cite{HLS} show that the interface between the fluids becomes very  ramified, and dendrite like structures occur as time evolves if $g(\rho_--\rho_+)+\gamma_w<0.$ 

If $b=g\rho_+$ and the volume of fluid $+$ is equal to $2\pi$, there exist also other stationary solutions of \eqref{eq:S}.
They appear only in the unstable regime or sufficiently close  to it, that is when $\gamma_w>0$ and the more dense fluid lies above in the cell.
We show that for certain small $\gamma_w>0$ there exist finger-shaped stationary solutions of \eqref{eq:S}, and therefore we shall refer also to $(\gamma_w,f,h)$ to be solution of \eqref{eq:S}.
Given $1\leq l\in\N, $ we define
\[
\ov\gamma_l:=\frac{g(\rho_+-\rho_-)}{l^2}.
\]
\begin{figure}
\includegraphics[clip=true, angle=0,  trim = 0 -10 0 0, width=0.55\linewidth]{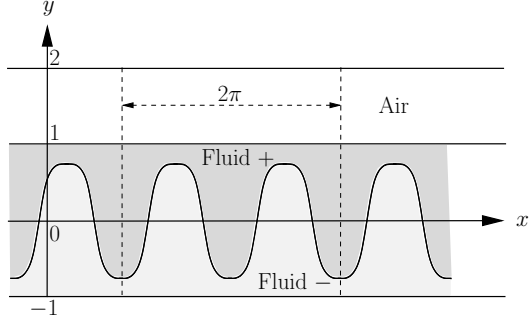}
\caption{Steady-state solution of \eqref{eq:S} when $\rho_-<\rho_+$ and $\gamma_w>0.$}
\label{F:2}
\end{figure}
\begin{thm}\label{T:P3} Let $b=g\rho_+$ and $\gamma_w(\rho_+-\rho_-)>0$.
If  $(\gamma_w,f,h)$  is a stationary solution of \eqref{eq:S} satisfying \eqref{eq:cond3}, then $h=0$ and  $(\gamma_w,f)$ is a solution 
 of the Laplace-Young equation
\begin{align}\label{eq:FiF1}
\gamma_w\kappa(f)+g(\rho_+-\rho_-)f=0. 
\end{align}
The solution of \eqref{eq:FiF1} are, up to a translation, even 
and all even solutions of \eqref{eq:FiF1} can be represented as a disjoint union
\[ 
\cup_{l=1}^\infty\{(\gamma_l(\e), f_l(\e))\,:\, \e\in\R\}\bigcup\cup_{l=1}^\infty(\ov\gamma_{l+1},\ov\gamma_l)\times\{0\}\bigcup (\ov\gamma_1,\infty)\times\{0\},
\]
with continuous functions
\[
(\gamma_l,f_l):\R\to (0,\infty)\times \{f\in h^{4+\alpha}_{0,e}(\s)\,:\,\|f\|_{C(\s)}<1\},\qquad 1\leq l\in\N, 
\]
 which, near $\e=0$, are real analytic  and satisfy:
\begin{align*} 
\gamma_l(\e)&=\ov\gamma_l +\frac{3 g(\rho_+-\rho_-)}{8}\e^2+O(\e^4),\qquad f_l(\e)&=\e\cos(lx)+O(\e^2). 
 \end{align*}
While $\gamma_l$ is even and  
\[
\lim_{|\e|\nearrow \infty}\gamma_l(\e)\leq \frac{2\pi^2g(\rho_+-\rho_-)}{B^2(3/4,1/2)l^2},
\]
either $\|f_l(\e)\|_{C(\s)}\nearrow_{\e\to\infty} 1$ or $\|f_l'(\e)\|_{C(\s)}\nearrow_{\e\to\infty} \infty.$

Additionally, the equilibrium $(\gamma_l(\e),f_l(\e),0)$ of problem \eqref{eq:S} is unstable if $|\e| $ is small.
When $l=1$ we have to assume   $\e\neq0$ too.
\end{thm}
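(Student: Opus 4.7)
The plan is in four steps. \textbf{Step 1 (Reduction to \eqref{eq:FiF1}).} At steady state, $\p_t f=\p_t h=0$ in \eqref{eq:S} gives $\p_\nu u_+=0$ on $\G(f)\cup\G(h)$ and $\p_\nu u_-=0$ on $\G(f)$. Together with $u_-=b=g\rho_+$ on $\G_{-1}$, the standard elliptic energy identity (test the harmonic problem for $u_--g\rho_+$ against itself and integrate by parts) forces $u_-\equiv g\rho_+$ in $\0(f)$; the same argument with Neumann data on the whole boundary of $\0(f,h)$ forces $u_+\equiv c$ there for some constant $c$. Averaging the two jump identities over $\s$, using $\int_{\s}\kappa(v)\,dx=0$ for every $v\in C^2(\s)$ together with the preservation $\int_{\s}f=\int_{\s}h$ coming from Lemma \ref{L:1} and \eqref{eq:cond3}, we deduce $c=g\rho_+$ and $\int_{\s}f=\int_{\s}h=0$. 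The $\G(h)$ condition collapses to $\gamma_d\kappa(h)=g\rho_+ h$, and testing with $h$ yields $-\gamma_d\int_{\s}(h')^2(1+(h')^2)^{-1/2}\,dx=g\rho_+\int_{\s}h^2\,dx$, forcing $h\equiv 0$. The remaining $\G(f)$ jump is precisely \eqref{eq:FiF1}.

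\textbf{Step 2 (Even symmetry and local bifurcation).} Multiplying \eqref{eq:FiF1} by $f'$ produces the first integral
\[
\gamma_w(1+(f')^2)^{-1/2}=\gamma_w-\tfrac{1}{2}g(\rho_+-\rho_-)(M^2-f^2),\qquad M:=\max f,
\]
so the $(f,f')$-phase portrait is symmetric under $f'\mapsto -f'$; every periodic orbit is therefore symmetric about its extrema, hence even after translation. On the space of even, mean-zero $h^{4+\alpha}(\s)$-functions I would apply the Crandall--Rabinowitz theorem \cite{CR3} to the analytic map $F(\gamma_w,f):=\gamma_w\kappa(f)+g(\rho_+-\rho_-)f$. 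The linearisation $\gamma_w\p_x^2+g(\rho_+-\rho_-)$ has one-dimensional kernel $\spa\{\cos(lx)\}$ precisely at $\gamma_w=\ov\gamma_l$, and the transversality $\p_{\gamma_w}\p_f F(\ov\gamma_l,0)[\cos(lx)]=-l^2\cos(lx)$ does not lie in the range of $\p_f F(\ov\gamma_l,0)$. Expanding $\kappa(f)=f''-\tfrac{3}{2}(f')^2 f''+O(f^5)$, matching powers of $\e$, and using $\sin^2(lx)\cos(lx)=[\cos(lx)-\cos(3lx)]/4$ to project the $O(\e^3)$ solvability condition onto $\cos(lx)$ gives $\gamma_l'(0)=0$ and $\gamma_l''(0)/2=3\ov\gamma_l l^2/8=3g(\rho_+-\rho_-)/8$. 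The reflection symmetry $F(\gamma_w,-f)=-F(\gamma_w,f)$ forces $\gamma_l$ even and $f_l$ odd in $\e$.

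\textbf{Step 3 (Global branches and asymptotics).} I would continue each local branch globally via the analytic global bifurcation theorem applied to the analytic map $F$. The first integral reduces the period condition $T=2\pi/l$ to the explicit quadrature
\[
\frac{\pi}{l}=\sqrt{\frac{2\gamma_w}{g(\rho_+-\rho_-)}}\int_{-\pi/2}^{\pi/2}\frac{1-\beta\cos^2\phi}{\sqrt{2-\beta\cos^2\phi}}\,d\phi,\qquad\beta:=\frac{g(\rho_+-\rho_-)\,M^2}{2\gamma_w}\in(0,1],
\]
obtained via the substitution $f=M\sin\phi$. Evaluating at $\beta=1$ and reducing through $s=\sin\phi$, $t=s^2$ to $\int_0^{\pi/2}\sqrt{\sin\psi}\,d\psi=B(3/4,1/2)/2$ yields the claimed upper bound. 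Since $\beta<1$ is equivalent to $|f'|<\infty$ and $M<1$ is imposed by the target set, the only way the Dancer--Buffoni--Toland alternative can terminate the branch is through $\|f_l(\e)\|_{C(\s)}\to 1$ or $\|f_l'(\e)\|_{C(\s)}\to\infty$.

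\textbf{Step 4 (Instability).} For the stability claim I would apply the exchange-of-stability principle of \cite{CR3} to the full linearisation $\p_Z\Phi$ at $(\gamma_l(\e),f_l(\e),0)$. From the Fourier-symbol representation of $\p_Z\Phi(0)$ derived in the Appendix, the eigenvalues at the trivial branch are, up to positive factors, $\sigma_j\sim -j(\gamma_w j^2-g(\rho_+-\rho_-))$, which are strictly positive on every mode $j$ with $j^2<g(\rho_+-\rho_-)/\gamma_w$. For $l\geq 2$ and $\gamma_w$ near $\ov\gamma_l$ the modes $j=1,\dots,l-1$ are already strictly unstable at the trivial solution, and continuity of the spectrum transfers this instability to $(\gamma_l(\e),f_l(\e),0)$ for small $|\e|$. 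For $l=1$ only the crossing mode is marginal; since $\gamma_1'(0)=0$ and $\gamma_1''(0)=3g(\rho_+-\rho_-)/4>0$ the bifurcation is supercritical, so the exchange-of-stability formula $\gamma(\e)\sim -\gamma_1''(0)\,\e^2\,\mu_1'(\ov\gamma_1)$ with $\mu_1'(\ov\gamma_1)<0$ (read off from Theorem \ref{T:P2}) produces a positive eigenvalue of $\p_Z\Phi(\gamma_l(\e),f_l(\e),0)$ for $\e\neq 0$. The main technical obstacle is this spectral-perturbation step: one must know that the eigenvalues of $\p_Z\Phi$ depend continuously on the equilibrium as it moves along the bifurcating curve, which requires exploiting the sectoriality of $\p_Z\Phi$ uniformly near the branch rather than only at $(0,0)$.
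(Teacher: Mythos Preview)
Your Steps 1--3 are correct and in places more self-contained than the paper, which largely defers to \cite{EEM3,EM12} for the bifurcation analysis of \eqref{eq:FiF1}. Your argument that $h\equiv 0$ (average the two jump relations to force $u_+=g\rho_+$ and $\int_{\s} f=\int_{\s} h=0$, then test $\gamma_d\kappa(h)=g\rho_+h$ against $h$) is slicker than the paper's, which first cites an external result to conclude that $h$ is constant and then rules out a nonzero constant by an ODE symmetry/contradiction argument. Your quadrature derivation of the Beta-function bound is likewise explicit where the paper simply quotes the result.

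Step 4 has a genuine gap in the $l=1$ case. You propose to apply the exchange-of-stability theorem directly to $\p_Z\Phi$ at $(\ov\gamma_1,0,0)$. But $\p_Z\Phi(0)$ always carries $0$ as an eigenvalue on the $m=0$ Fourier mode (this is the infinitesimal version of the volume conservation in Lemma~\ref{L:1}; in the paper's notation $\Lambda_+(0)=0$ for every $\gamma_w$). Hence at $\gamma_w=\ov\gamma_1$ the kernel of $\p_Z\Phi$ is at least two-dimensional and the simple-eigenvalue hypothesis of \cite{CR3} fails. The paper's remedy is to pass to the volume-constrained variable $\w f:=f-h$ with zero mean and to work with the reduced operator $\Psi$ of \eqref{eq:PSI}; the key Lemma~\ref{L:Le1} shows that $\Psi_2$ lands in mean-zero functions, which eliminates the spurious zero mode. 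The paper then \emph{re-derives} the local bifurcation for $\Psi$ itself (not for your scalar map $F$), checking that $\Kern\p_{(f,\w f)}\Psi(\ov\gamma_1,0)$ is one-dimensional and that both the transversality and the $\mathcal J$-simplicity conditions of \cite[Theorem~1.16]{CR3} hold; only in that framework does the exchange-of-stability formula yield $\mu(\e)>0$ for small $\e\neq 0$. Your argument for $l\geq 2$ (a strictly positive eigenvalue on mode $j=1$ at the trivial branch persists by spectral continuity) is unaffected by this issue and agrees with the paper. The obstacle you flag at the end---uniform sectoriality along the branch---is real but secondary; the missing ingredient is the reduction from $\Phi$ to $\Psi$.
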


Here $B$ stands for Euler's beta function.
Notice that the stationary solutions of \eqref{eq:S}, which satisfy \eqref{eq:cond3} (see in Figure \ref{F:2}), 
are the same with the stationary solutions of the Muskat problem studied in \cite{EM12, CG}, 
where   just one moving boundary is considered ($h$ is chosen a priori to be zero).
For a precise description of the global bifurcation branches $(\gamma_l,f_l)$ we refer to \cite{EEM3}.
It is shown there that the situation  $\|f_l(\e)\|_{C(\s)}\nearrow_{\e\to\infty} 1$ may occur only for small integers $l$.

\section{The evolution  equation}
In order to solve problem \eqref{eq:S} we re-write it as an abstract evolution equation on the unit circle.
To do that we first transform system \eqref{eq:S} into a system of equations on fixed domains by using the unknown functions $(f,h).$
Let $\0_-:=\0(0)$ and $\0_+:=\0(0,0).$
Given $(f, h)\in\V$ we define the mappings
$\phi_{ f}=(\phi_{f}^1,\phi_{f}^2):\Omega_{-}\to\Omega(f)$
by 
\[
\phi_{f}(x,y):=(x,y+(1+ y)f(x)),\quad(x,y)\in\Omega_-,
\]
respectively $\phi_{ f,h}=(\phi_{f,h}^1,\phi_{f,h}^2):\Omega_{+}\to\Omega(f,h)$
\[
\phi_{f,h}(x,y):=(x, y (1+h(x))+(1-y)f(x)),\quad(x,y)\in\Omega_+.
\]
One can easily check that $\phi_f$ and $\phi_{f,h}$ are diffeomorphisms for all $(f, h)\in\V.$ 
These diffeomorphisms induce pull-back and push-forward operators (see e.g. \cite{EM1}) which we use to transform the differential operators involved in system \eqref{eq:S}
into operators on the domains $\0_\pm$ and their boundaries, respectively. 
Each pair  $(f, h)\in\V$ induces linear elliptic operators
\begin{eqnarray*}
&\A(f):\mbox{\it{buc}}\,^{2+\alpha}(\Omega_-)\to \mbox{\it{buc}}\,^{\alpha}(\Omega_-),& \A(f)v:=\Delta(v_-\circ\phi_{f}^{-1})\circ\phi_{f},\\[1ex]
&\A(f,h):\mbox{\it{buc}}\,^{2+\alpha}(\Omega_+)\to \mbox{\it{buc}}\,^{\alpha}(\Omega_+),& \A(f,h)v:=\Delta(v_+\circ\phi_{f,h}^{-1})\circ\phi_{f,h},
\end{eqnarray*}
which depend, as bounded operators, analytically on $f$ and $h$.
Denote by $\tr_0$ the trace operator with respect to $\Gamma_0:=\s\times\{0\}$.
We associate problem \eqref{eq:S} the following   trace operators  on $\Gamma_0$:
\begin{align*}
B(f)v_-&:=k\mu_-^{-1}\tr_0 (\langle \nabla(v_-\circ\phi_{f}^{-1})|(-f',1)\rangle\circ\phi_{f}),\quad v_-\in \mbox{\it{buc}}\,^{2+\alpha}(\Omega_-),\\[1ex]
\B(f,h)v_+&:=k\mu_+^{-1}\tr_0 (\langle \nabla(v_+\circ\phi_{f,h}^{-1})|(-f',1)\rangle\circ\phi_{f,h}),\quad v_+\in \mbox{\it{buc}}\,^{2+\alpha}(\Omega_+),
\end{align*}
which, seen as bounded operators into $h^{1+\alpha}(\s)$,  depend analytically on $f$ and $h$ as well.
Lastly, we define  a boundary operator on $\Gamma_1:=\s\times\{1\}.$
Given $(f,h)\in\V,$ we let
\[
\B_1(f,h)v_+:=-k\mu_+^{-1}\tr_1 (\langle \nabla(v_+\circ\phi_{f,h}^{-1})|(-h',1)\rangle\circ \phi_{f,h}),\quad v_+\in \mbox{\it{buc}}\,^{2+\alpha}(\Omega_+),
\]
whereby  $\tr_1$ is the trace operator with respect to $\Gamma_1$.

With this notation one can easily verify that if 
 $(f,h,u_+,u_-)$ is a solution of \eqref{eq:S},
then $(f,h, v_+:=u_+\circ\phi_{f,h}, v_-:=u_-\circ\phi_f)$
solves the following system of equations:
\begin{equation}\label{eq:TS}
\left\{
\begin{array}{rllllll}
\A(f,h) v_+&=&0&\text{in}& \Omega_+,\\[1ex]
\A(f) v_-&=&0&\text{in}& \Omega_-,\\[1ex]
\p_t h&=&\B_1(f,h)v_+&\text{in}& \Gamma_1,\\[1ex]
v_+&=&g\rho_+(1+h)-\gamma_{d}\kappa(h)&\text{on}& \Gamma_1,\\[1ex]
 v_-&=&b&\text{on}&\Gamma_{-1},\\[1ex]
v_+-v_-&=&g(\rho_+-\rho_-) f+\gamma_{w} \kappa(f)&\text{on}& \Gamma_0,\\[1ex]
\p_t f+\B(f)v_-&=&0 &\text{on}& \Gamma_{0},\\[1ex]
\p_t f+\B(f,h)v_+&=&0 &\text{on}& \Gamma_{0},\\[1ex]
f(0)&=&f_0,&&\\[1ex]
h(0)&=&h_0&&
\end{array}
\right.
\end{equation}
for all $t\in[0,T]$, where 
the transformed curvature operator $\kappa:h^{4+\alpha}(\s)\to h^{2+\alpha}(\s)$  is defined by $\kappa(f):=f''/(1+f'^2)^{3/2}.$
The notion of solution of \eqref{eq:TS} is defined analogously to that of \eqref{eq:S}.
Notice that the parametrisation $(f,h)$ is left invariant by the transformation above. 
In fact, one can see, cf. \cite[Lemma 1.2]{EM1} that each solution of \eqref{eq:TS} corresponds to a unique solution of \eqref{eq:S}.

We introduce now solution operators corresponding to the system \eqref{eq:TS}.
Given $f\in \V_1$ and $(q,p)\in h^{1+\alpha}(\s)\times h^{2+\alpha}(\s)$,  
we let $\T(f,q,p)\in \mbox{\it{buc}}\,^{2+\alpha}(\Omega_-)$ denote the solution of the linear, elliptic mixed boundary value problem
\begin{equation}\label{eq12}\left\{
\begin{array}{rlllll}
\A(f) v_-&=&0&\text{in}& \Omega_-,\\[1ex]
\B(f)v_-&=&q &\text{on}& \Gamma_{0},\\[1ex]
v_-&=&p&\text{on}& \Gamma_{-1}.
\end{array}
\right.
\end{equation}
Further on, we define  
$\kS:\V\times (h^{2+\alpha}(\s))^2\to\mbox{\it{buc}}\,^{2+\alpha}(\Omega_+)$
 by writing 
$\kS(f,h,p,r)$ for the unique solution of the problem
\begin{equation}\label{eq13}\left\{
\begin{array}{rlllll}
\A(f,h) v_+&=&0&\text{in}& \Omega_+,\\[1ex]
v_+&=&p&\text{on}& \Gamma_1,\\[1ex]
 v_+&=&r&\text{on}&\Gamma_{0}.
\end{array}
\right.
\end{equation}
It is convenient to write $\T(f,q,p)=\T_1(f)q+\T_2(f)p,$
where
\begin{align*}
\T_1(f)q:=(\A(f),\B(f),\tr)^{-1}(0,q,0),\quad \T_2(f)p:=(\A(f),\B(f),\tr)^{-1}(0,0,p),
\end{align*}
respectively $\kS(f,h,p,r)=\kS_1(f,h)p+\kS_2(f,h)r,$ with
\begin{align*}
\kS_1(f,h)p: &=(\A(f,h),\tr,\tr)^{-1}(0,p,0),\\[1ex] 
\kS_2(f,h)r:&=(\A(f,h),\tr,\tr)^{-1}(0,0,r).
\end{align*}
The operators $\T_i(f)$ and $\kS_i(f,h),$ $i=1,2,$ are bounded linear operators and they depend, in the norm topology, analytically on $f$ and $h$ too.

The key point of our analysis is the following observation.
If  $(f,h, v_+,v_-)$ is a classical solution of \eqref{eq:TS} for to the initial data $(f_0,h_0)$, then it must hold:
\begin{itemize}
\item[$(i)$] $f(0)=f_0$ and $h(0)=h_0,$
\item[$(ii)$] $v_-=\T(f,-\p_t f, b),$
\item[$(iii)$] $v_+=\kS(f,h,g\rho_+(1+h)-\gamma_{d}\kappa(h),\tr_0v_-+g(\rho_+-\rho_-) f+\gamma_{w}\kappa(f)),$
\item[$(iv)$] $\p_tf+\B(f,h)v_+=0,$
\item[$(v)$] $\p_th=\B_1(f,h)v_+.$
\end{itemize}

Let us now show that from $(ii)-(iv)$ we can determine the derivative $\p_tf$ as a function  of $f$, $h$, and $t$ only.
Indeed, we plug $(ii)$  into $(iii)$ and $(iii)$ into $(iv)$ to obtain the equation
\begin{align*}
&\p_tf+\B(f,h)\kS_1(f,h)[g\rho_+(1+h)-\gamma_d\kappa(h)]\\[1ex]
&\phantom{space=}+\B(f,h)\kS_2(f,h)[\tr_0\T(f,-\p_t f, b)+g(\rho_+-\rho_-) f+\gamma_{w}\kappa(f)]=0,
\end{align*}
 which can be writen equivalently
\begin{equation}\label{eq:in}
\begin{aligned}
&(\id_{h^{1+\alpha}(\s)}-\B(f,h)\kS_2(f,h)\tr_0\T_1(f))\p_t f
+\B(f,h)\kS_2(f,h)\tr_0\T_2(f)b\\[1ex]
 &\phantom{space=}+\B(f,h)\kS(f,h,g\rho_+(1+h)-\gamma_{d}\kappa(h),g(\rho_+-\rho_-) f+\gamma_{w}\kappa(f)).
\end{aligned}
\end{equation}
The linear operator which is evaluated at $\p_tf$ is invertible, so that we obtain, by applying its inverse to \eqref{eq:in},
an equation expressing the derivative $\p_tf$ in dependence of $f, h,$ and $t$.
Indeed, we have:
\begin{lemma}\label{L:KL} The set $\V$ contains an open neighbourhood $\W$ of $0$ with the property that
\[
\cG(f,h):=\id_{h^{1+\alpha}(\s)}-\B(f,h)\kS_2(f,h)\tr_0\T_1(f)\in\kL(h^{1+\alpha}(\s))
\] 
is an isomorphism for all $(f,h)\in\W.$
\end{lemma}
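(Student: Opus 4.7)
\smallskip
\noindent\textbf{Proof plan.} The plan is to evaluate $\cG$ at $(f,h)=(0,0)$ by explicit Fourier analysis, show that it reduces to a nonzero scalar multiple of the identity on $h^{1+\alpha}(\s)$, and then invoke the fact that invertibility is an open condition in $\kL(h^{1+\alpha}(\s))$ together with the analytic dependence of $\A,\B,\T_1,\kS_2$ on $(f,h)$ (recorded earlier in this section) to conclude that $\cG(f,h)$ remains invertible on a whole neighbourhood $\W$ of $0$ in $\V$.

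First I would simplify the three constituent operators at the base point. Since $\phi_0=\id$ and $\phi_{0,0}=\id$, we have $\A(0)=\A(0,0)=\Delta$, $\B(0)q = (k/\mu_-)\tr_0\p_y(\cdot)$, and $\B(0,0)v_+=(k/\mu_+)\tr_0\p_y v_+$; similarly $\kS_2(0,0)r$ solves $\Delta v_+=0$ in $\0_+=\s\times(0,1)$ with $v_+=0$ on $\G_1$ and $v_+=r$ on $\G_0$, while $\T_1(0)q$ solves $\Delta v_-=0$ in $\0_-=\s\times(-1,0)$ with $(k/\mu_-)\p_y v_-=q$ on $\G_0$ and $v_-=0$ on $\G_{-1}$. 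Expanding in a Fourier series $v_\pm(x,y)=\sum_{n\in\Z}a_n^\pm(y)e^{inx}$, the fibre ODE $a_n''-n^2a_n=0$ can be solved explicitly, giving, for $n\neq 0$,
\begin{equation*}
\tr_0\T_1(0)\bigl(e^{inx}\bigr)=\frac{\mu_-\tanh n}{kn}\,e^{inx},\qquad
\B(0,0)\kS_2(0,0)\bigl(e^{inx}\bigr)=-\frac{kn\coth n}{\mu_+}\,e^{inx},
\end{equation*}
and analogous (limiting) formulas for $n=0$. Composing,
\begin{equation*}
\B(0,0)\kS_2(0,0)\tr_0\T_1(0)\bigl(e^{inx}\bigr)
=-\frac{\mu_-}{\mu_+}\,e^{inx}\qquad(n\in\Z),
\end{equation*}
so this operator equals $-(\mu_-/\mu_+)\id$ and hence
\begin{equation*}
\cG(0,0)=\Bigl(1+\frac{\mu_-}{\mu_+}\Bigr)\id
=\frac{\mu_++\mu_-}{\mu_+}\,\id\in\kL(h^{1+\alpha}(\s)).
\end{equation*}
Since $\mu_\pm>0$, this is an isomorphism with inverse $(\mu_+/(\mu_++\mu_-))\id$.

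For the neighbourhood step I would appeal to the analyticity of the three operators $\T_1:\V_1\to\kL(h^{1+\alpha}(\s),\mbox{\it buc}^{2+\alpha}(\0_-))$, $\kS_2:\V\to\kL(h^{2+\alpha}(\s),\mbox{\it buc}^{2+\alpha}(\0_+))$, and $\B:\V\to\kL(\mbox{\it buc}^{2+\alpha}(\0_+),h^{1+\alpha}(\s))$ in $(f,h)$, together with the continuity of the trace $\tr_0:\mbox{\it buc}^{2+\alpha}(\0_-)\to h^{2+\alpha}(\s)$. Composition is continuous (even analytic) in each factor, so $(f,h)\mapsto \cG(f,h)$ is a continuous map $\V\to\kL(h^{1+\alpha}(\s))$. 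Because the set of isomorphisms is open in $\kL(h^{1+\alpha}(\s))$ and contains $\cG(0,0)$ by the Fourier calculation above, there exists an open neighbourhood $\W\subset\V$ of $0$ on which $\cG(f,h)$ is an isomorphism, which is the desired conclusion.

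The only step requiring genuine care is the Fourier computation at $(0,0)$; the cancellation that turns the seemingly nonlocal composition $\B\kS_2\tr_0\T_1$ into a pure scalar multiplier hinges on the precise form of the boundary conditions (Neumann at $\G_0$ for $\T_1$, Dirichlet at $\G_0$ for $\kS_2$) and on the vanishing conditions at $\G_{-1}$ and $\G_1$, and it is this miracle that ultimately makes $\cG(0,0)$ trivially invertible. The rest of the argument is essentially a soft perturbation statement.
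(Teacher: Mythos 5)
Your proof is correct and follows essentially the same route as the paper: compute $\cG(0,0)$ explicitly via Fourier series for the three constituent solution operators on the flat domains, observe the cancellation $\tanh n\cdot\coth n=1$ giving $\cG(0,0)=\frac{\mu_++\mu_-}{\mu_+}\id$, and then use the analyticity of all the operators in $(f,h)$ together with the openness of the set of isomorphisms to extend invertibility to a neighbourhood $\W$ of the origin. Your intermediate formulas for $\tr_0\T_1(0)$ and $\B(0,0)\kS_2(0,0)$ agree with the expansions the paper records in its proof of this lemma.
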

\begin{proof} 
The proof is based on a continuity argument. 
Namely, all the operators defined in this section depend analytically on their variables and then so does  $\cG$ too.
Thus, it suffices  to show that $\cG(0,0)$ is an isomorphism.
To do that, we represent  $\cG(0,0)$ as a Fourier multiplication operator.
Given $q\in h^{1+\alpha}(\s)$ we let $q=\sum_{m\in\Z}\wh h(m) e^{imx}$  denote its Fourier series expansion.
A Fourier series ansatz yields for 
 $\T_1(0)q$ the following   expression
\[
\T_1(0)q(x,y)=\frac{\mu_-}{k}(1+y)\wh q(0)
+\frac{\mu_-}{k}\sum_{m\in\Z\setminus\{0\}}\frac{e^{m}e^{my}-e^{-m}e^{-my}}{m(e^{m}+e^{-m})}\wh q(m) e^{imx}
\]
for $(x,y)\in\0_-.$
Respectively, if $r=\sum_{m\in\Z}\wh r(m) e^{imx},$ then $\kS_2(0,0)r$ may be  expanded as follows
\[
\kS_2(0,0)r(x,y)=(1-y)\wh r(0)+\sum_{m\in\Z\setminus\{0\}}\frac{e^{2m}e^{-my}-e^{my}}{e^{2m}-1}\wh r(m)e^{imx}
\]
 for $(x,y)\in\0_+.$
Combining these two relations  and taking the normal derivative yields that
\begin{equation*}
\cG(0,0)q=\frac{\mu_-+\mu_+}{\mu_+}q, \quad\forall q\in h^{1+\alpha}(\s),
\end{equation*}
thus $\cG(0,0)$ is an isomorphism.
\end{proof}
In virtue of Lemma \ref{L:KL},  if the pair $(f,h)$  maps into $\W,$ we may apply the inverse of $\cG(f,h)$ to  \eqref{eq:in}, 
and get
 \begin{equation}\label{eq:EV1}
\p_tf=\Phi_1(t,f,h),
\end{equation}
with a nonlinear and nonlocal operator $\Phi_1$ defined by the relation
\begin{equation}\label{eq:PHI1}
\begin{aligned}
\Phi_1(t,f,h):=&-\cG^{-1}(f,h)\left\{\B(f,h)\kS_2(f,h)\tr_0\T_2(f)b\right.\\[1ex]
&+\B(f,h)\kS(f,h,g\rho_+(1+h)-\gamma_{d}\kappa(h),g(\rho_+-\rho_-) f+\gamma_{w}\kappa(f)).
\end{aligned}
\end{equation}
Furthermore, from $(ii)-(v)$ and \eqref{eq:EV1} we  obtain that $h$ is solution of the equation
 \begin{equation}\label{eq:EV2}
\p_th=\Phi_2(t,f,h),
\end{equation}
where the operator $\Phi_2$  is given by
\begin{equation}\label{eq:PHI2}
\begin{aligned}
\Phi_2(t,f,h):=&B_1(f,h)\kS(f,h,g\rho_+(1+h)-\gamma_{d}\kappa(h),g(\rho_+-\rho_-) f+\gamma_{w}\kappa(f))\\[1ex]
&+B_1(f,h)\kS_2(f,h)\tr_0\T(f,-\Phi_1(t,f,h), b).
\end{aligned}
\end{equation}

By Lemma \ref{L:KL} and relations \eqref{eq:EV1}-\eqref{eq:PHI2} we found that all the solutions $(f,h)$ of \eqref{eq:TS} which are contained  in $\W$
solve the following abstract evolution equation
\begin{equation}\label{eq:ABS}
\p_tZ=\Phi(t,Z),\qquad Z(0)=Z_0, 
\end{equation}
where $\Phi:=(\Phi_1,\Phi_2)$ and we introduced the new   variable $Z:=(f,h)$.
Concerning the operator $\Phi$ we state:
\begin{thm}\label{T:PHI-O} The operator $\Phi$ has the same regularity as $b$, it is analytic in the variable $Z$, and
if $b(0)=:c\in\R,$ then   $\p_f\Phi_i(0)$ and $\p_h\Phi_i(0),$ $i=1,2,$ are Fourier multipliers with symbols $(\lambda^f_i(m))_{m\in\Z}$ and  $(\lambda^h_i(m))_{m\in\Z},$ $i=1,2,$ 
 respectively, given by 
 \begin{align}\label{eq:coef_1_f}
&\lambda^f_1(m):=\left[A_\mu(c-g\rho_+)+g(\rho_+-\rho_-)-\gamma_{w}m^2\right]\frac{k|m|}{(\mu_++\mu_-)\tanh(|m|)},\\[1ex]
\label{eq:coef_2_f}
&\lambda^f_2(m):=\left[\frac{c(\mu_+-\mu_-)+2g\rho_+\mu_-}{\mu_++\mu_-}-g\rho_--\gamma_wm^2\right]\frac{k|m|}{(\mu_++\mu_-)\sinh(|m|)},\\[1ex]
\label{eq:coef_1_h}
&\lambda^h_1(m):=-\left[\frac{g\rho_+\mu_-+c\mu_+}{\mu_-+\mu_+}+\gamma_{d}m^2\right]\frac{k|m|}{(\mu_-+\mu_+)\sinh(|m|)},
\end{align}
\begin{align}
\label{eq:coef_2_h}
&\lambda^h_2(m):=-\left[\frac{c\mu_++g\rho_+\mu_-}{\mu_-+\mu_+}+\gamma_dm^2\right]\frac{k|m|}{\mu_+\tanh(|m|)}-\frac{\mu_-}{\mu_+}\frac{\lambda^h_1(m)}{\cosh(m)}.
\end{align}
\end{thm}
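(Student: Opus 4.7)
My plan is to split the proof into a regularity/analyticity part and a symbol-computation part.

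For the first part I would argue as follows. Each of the elliptic solution operators $\T_i(f)$ and $\kS_i(f,h)$, the normal-derivative trace operators $\B(f)$, $\B(f,h)$, $\B_1(f,h)$, and the pullback curvature $\kappa$ depends analytically on its spatial arguments in the operator-norm topology, as already used in the text. Since inversion is analytic on the open set of bounded invertible operators, Lemma \ref{L:KL} gives analyticity of $(f,h)\mapsto \cG^{-1}(f,h)$ on $\W$. The boundary datum $b$ enters $\Phi$ only through the single linear term $\tr_0 \T_2(f) b$ in \eqref{eq:PHI1}--\eqref{eq:PHI2}, so $\Phi$ inherits its temporal regularity from $b$ and is jointly real-analytic in $Z = (f,h)$.

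For the Fourier-multiplier structure I would observe that when $f = h = 0$ and $b \equiv c$ the reference configuration is invariant under horizontal translations of $\s$. All constituent operators therefore commute there with the shift $\tau_s: x \mapsto x + s$, and so do the Fr\'echet derivatives $\p_f \Phi_i(0)$ and $\p_h \Phi_i(0)$. Any bounded linear operator on $h^{1+\alpha}(\s)$ that commutes with every $\tau_s$ is automatically a Fourier multiplier, so it suffices to test each derivative on a single mode $e^{imx}$ to read off the symbol.

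The symbols themselves would be obtained by direct calculation. The Fourier ans\"atze already used for $\T_1(0)$ and $\kS_2(0,0)$ in the proof of Lemma \ref{L:KL} extend verbatim to $\T_2(0)$ and $\kS_1(0,0)$ and give the four elementary harmonic building blocks. The linearization of the curvature at zero is $\kappa'(0)\xi = \xi''$, and the linearizations of the pullback operators $\A(f), \A(f,h)$, together with those of $\B(f), \B(f,h), \B_1(f,h)$, at the flat state reduce to the standard shape derivatives of the normal derivative of a harmonic function at a flat boundary. Substituting these into \eqref{eq:PHI1}--\eqref{eq:PHI2} and using $\cG^{-1}(0,0)q = \mu_+ q/(\mu_+ + \mu_-)$ from the proof of Lemma \ref{L:KL}, each term in \eqref{eq:coef_1_f}--\eqref{eq:coef_2_h} can be read off: a $\tanh(|m|)$ denominator arises when a Dirichlet mode on a horizontal boundary is probed by a normal derivative on the same boundary, while $\sinh(|m|)$ and $\cosh(m)$ denominators appear when the probe sits on the opposite face.

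The genuinely delicate step will be the bookkeeping for $\p_h \Phi_2(0)$. Because $\Phi_2$ incorporates $\Phi_1$ through the substitution of the constraint $\p_t f + \B(f,h) v_+ = 0$ into $\p_t h = \B_1(f,h) v_+$, the linearization in $h$ acquires a cross term in which a perturbation on $\Gamma_1$ must be transported down through $\Omega_+$ to $\Gamma_0$, further through $\Omega_-$ to $\Gamma_0$, and then back up to $\Gamma_1$. This chain is precisely the source of the correction $-(\mu_-/\mu_+)\lambda_1^h(m)/\cosh(m)$ in \eqref{eq:coef_2_h} and is where I expect the main effort to lie; the paper relegates the full Fourier calculation of these linearizations to the Appendix.
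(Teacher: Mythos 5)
Your approach is essentially the same as the paper's: the regularity/analyticity statements are handled by the same analyticity-of-composition-and-inversion argument, and the symbols are obtained by the same Fourier-series/linearization computation that the paper carries out in its Appendix (differentiating $\cG^{-1}$, $\T_i$, $\kS_i$, $\B$, $\B_1$ at the flat state and reading off the mode-by-mode action). The one genuine addition is your use of translation invariance to establish a priori that the derivatives are Fourier multipliers, which legitimizes testing on a single mode $e^{imx}$ rather than a full Fourier expansion; the paper does not invoke this but arrives at the multiplier structure directly by exhibiting it in the calculation. Your identification of the composed chain $\B_1\kS_2\tr_0\T_1\Phi_1$ as the source of the $-(\mu_-/\mu_+)\lambda_1^h(m)/\cosh(m)$ correction in $\lambda_2^h$ is also exactly what the Appendix shows.
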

\begin{proof} The  regularity assertion is obvious.
That the first order partial derivatives of $\Phi_i, i=1,2,$ with respect to $f$ and $h$ are Fourier multipliers follows from \eqref{eq:P1f}-\eqref{eq:P2h}, relations  
proven in the Appendix.
\end{proof}
 
 We give now a proof of our first main result.
 
 \begin{proof}[Proof of Theorem \ref{T:P1} ] 
 We  verify  that the assumptions of \cite[Theorem 8.4.1]{L} are fulfilled by $\Phi.$
 Theorem \ref{T:P1} is then a consequence of this result. 
 For continuity reasons it suffices in fact to show only that the derivative $\p_Z\Phi(0)$ generates a strongly continuous and analytic semigroup in $\kL((h^{1+\beta}(\s))^2),$
 i.e.
 \[
-\p_Z\Phi(0)\in\mathcal{H}(h^{2+2\sign(\gamma_w)+\beta}(\s)\times h^{2+2\sign(\gamma_d)+\beta}(\s), (h^{1+\beta}(\s))^2)
\]
for some $\beta\in(0,\alpha).$
By using the interpolation properties of the small H\" older spaces
\begin{equation}\label{interpolation}
(h^{\sigma_0}(\s),h^{\sigma_1}(\s))_\theta=h^{(1-\theta)\sigma_0+\theta\sigma_1}
(\s),
\end{equation}
if $  \theta\in(0,1)$ \text{and} $(1-\theta)\sigma_0+\theta\sigma_1\notin\N,$
 we find then all assumptions of \cite[Theorem 8.4.1]{L} 
to be fulfilled.
Here $(\cdot,\cdot)$ denotes the interpolation functor introduced by Da Prato and Grisvard \cite{DG}.

Let us first notice  that derivative $\p_h\Phi_1(0)$ maps $h^{2+2\sign(\gamma_d)+\w\beta}(\s)$ continuously into $h^{1+\beta}(\s)$ for some $\w\beta\in(0,\beta)$.
This property can be verified easily by using \cite[Theorem 3.4]{EM1}, which is a multiplier theorem based on some generalized Marcinkiewicz conditions.
 Since by \eqref{interpolation}
\[
h^{2+2\sign(\gamma_d)+\w\beta}(\s)=(h^{1+\beta}(\s), h^{2+2\sign(\gamma_d)+\beta}(\s))_{(1+2\sign(\gamma_d)+\w\beta-\beta)/(2\sign(\gamma_d)+1)}
\]
we deduce, in virtue of Theorem 1.6.1 and relation (2.2.2) in \cite{Am}, that $\p_Z\Phi(0)$ generates a strongly continuous and analytic semigroup exactly when 
$\p_f\Phi_1(0)$ and $\p_h\Phi_2(0)$ are generators, i.e.
\begin{align*}
&-\p_f\Phi_1(0)\in\mathcal{H}(h^{2+2\sign(\gamma_w)+\beta}(\s), h^{1+\beta}(\s)),\\[1ex]
&-\p_h\Phi_2(0)\in\mathcal{H}( h^{2+2\sign(\gamma_d)+\beta}(\s), h^{1+\beta}(\s)).
\end{align*}

When considering surface tension effects this property holds independently of the boundary conditions, and when
$\gamma_w=0$ and $\gamma_d=0$ this is true  if
\[
\frac{\mu_+-\mu_-}{\mu_++\mu_-}(c-g\rho_+)+g(\rho_+-\rho_-)<0\quad
\text{and}
\quad 
\frac{c\mu_++g\rho_+\mu_-}{\mu_-+\mu_+}>0,
\]
respectively, with  $b(0)$ sufficiently close to $c\in\R$ in $h^{2+\beta}(\s)$. 
We refer to \cite{EM1} where the generator property of a Fourier multiplier between  space of periodic and continuous functions is explicitly verified when knowing its symbol.
In virtue of \eqref{interpolation}  the proof is completed.
\end{proof}

\section{Equilibria and stability properties}

As we mentioned earlier, if we consider a fixed volume of fluid $+$ equal to $2\pi$ and if  $b=g\rho_+$, then $(f,h)=(0,0)$ is the unique flat  stationary solution of problem \eqref{eq:S}. 
Moreover, the reduced equation \eqref{eq:ABS} is autonomous since $\Phi$ does not depend on time for  constant $b$.
In order to study the stability properties of this equilibrium, as stated in Theorem  \ref{T:P2}, we shall use the principle of linearised stability, and need therefore to determine the spectrum of 
the derivative $\p\Phi(0).$
Being a  generator and taking into consideration that  the small H\"older space  $ h^{2+\alpha}(\s)$ is compactly embedded into $h^{1+\alpha}(\s)$,
 we obtain from  \cite[Theorem III.8.29]{K} that its spectrum consists entirely of isolated eigenvalues with finite multiplicity.

 In virtue of Theorems 9.1.2 and 9.1.3 in \cite{L} we know that the trivial solution $(f,h)=(0,0)$ is 
exponentially stable if the spectrum of $\p\Phi(0)$ is bounded away from the imaginary axis in the left half complex plane, 
and unstable if the infimum of the real part of all eigenvalues in the right half plane is positive. 
One can easily see that if $\lambda$ is an eigenvalue of $\p\Phi(0),$ then it must be, for some $m\in\N,$ eigenvalue of the matrix
\[
\left[
\begin{array}{cc}
\lambda_1^f(m)&\lambda_1^h(m)\\[1ex]
\lambda_2^f(m)&\lambda_2^h(m)
\end{array}
\right],
\]
where, for $b=g\rho_+$, we obtained the simpler expressions for the multiplier symbols:  
\begin{equation}\label{eq:symb}
\begin{aligned}
&\lambda^f_1(m)=-\left[g(\rho_--\rho_+)+\gamma_{w}m^2\right]\frac{km}{(\mu_++\mu_-)\tanh(m)},\\[1ex]
&\lambda^f_2(m)=-\left[g(\rho_--\rho_+)+\gamma_wm^2\right]\frac{km}{(\mu_++\mu_-)\sinh(m)},\\[1ex]
&\lambda^h_1(m)=-\left[g\rho_++\gamma_{d}m^2\right]\frac{km}{(\mu_-+\mu_+)\sinh(m)},\\[1ex]
&\lambda^h_2(m)=-\left[g\rho_++\gamma_dm^2\right]\frac{km}{\mu_++\mu_-}\frac{\mu_-(\cosh^2(m)-1)+\mu_+\cosh^2(m)}{\mu_+\sinh(m)\cosh(m)}.
\end{aligned}
\end{equation}
Thus, the spectrum of $\p\Phi(0)$ consists only of the eigenvalues
\begin{equation*}
\Lambda_\pm(m):=\frac{\lambda^f_1(m)+\lambda^h_2(m)\pm\sqrt{(\lambda^f_1(m)-\lambda^h _2(m))^2+4\lambda^h_1(m)\lambda_2^f(m)}}{2},
\end{equation*}
whereby $m\in\N.$
Easily, we see that $\Lambda_+(0)=0,$ thus we find our selves in the critical case of stability when $0$ is an eigenvalue, which makes it difficult for us
to establish the stability properties of the flat solutions.
This is due to the fact that the volume of fluid $+$ is preserved by the flow, and this property has not been included yet into our equations \eqref{eq:ABS}.
We do this by introducing a new variable $\w f:=f-h\in h^{2+2\sign(\gamma_d\cdot\gamma_w)+\alpha}_0(\s)$.
Then
\begin{equation}\label{eq:PSI}
\begin{aligned}
\p_t\w f&=\p_t f-\p_th=\Phi_1(f,f-\w f)-\Phi_2(f,f-\w f)=:\Psi_2(f,\w f), \\[1ex]
\p_t f&=\Phi_1(f, f-\w f)=:\Psi_1(f,\w f),
\end{aligned}
\end{equation}
and with this new variable, problem \eqref{eq:ABS} is equivalent to
\begin{equation}\label{eq:Psi}
\p_tX=\Psi(X),\qquad X(0)=X_0, 
\end{equation}
where $\Psi:=(\Psi_1,\Psi_2)$ and $X:=(f,\w f)$.
The trivial solution  of \eqref{eq:ABS} corresponds to the solution  $(f,\w f)=(0,0)$ of \eqref{eq:Psi},
so that  we shall study the stability properties of the trivial solution of \eqref{eq:Psi} which, as we shall see, it is more convenient.
This since:

\begin{lemma}\label{L:Le1}
It holds that $\Psi_2(f,\w f)\in h^{1+\alpha}_0(\s)$ for all $(f,\w f)$ in  a zero neighbourhood $\w\W\subset h^{2+2\sign(\gamma_w)+\alpha}(\s)
\times h^{2+2\sign(\gamma_d\cdot\gamma_w)+\alpha}_0(\s)$.
\end{lemma}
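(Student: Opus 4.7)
The plan is to prove the stronger mean-zero identity
\[
\int_\s \bigl(\Phi_1(t, f, h) - \Phi_2(t, f, h)\bigr)\, dx = 0 \qquad \text{for every } (f, h) \in \W,
\]
and then substitute $h = f - \w f$; in view of definition \eqref{eq:PSI} of $\Psi_2$, this gives $\int_\s \Psi_2(f, \w f)\, dx = 0$ on a neighbourhood $\w\W$ of zero in which $(f, f - \w f) \in \W$. Conceptually the identity is just the infinitesimal version of Lemma \ref{L:1}: since the volume of fluid~$+$ equals $2\pi + \int_\s (h - f)\, dx$, its preservation forces $\p_t \int_\s (f - h)\, dx = 0$, i.e.\ the identity above. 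However, because Lemma \ref{L:1} concerns actual solutions of \eqref{eq:S} while here I need the identity pointwise in $(f, h)$ prior to well-posedness, I would verify it directly by a divergence-theorem computation.

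Concretely, using $(iv)$ and $(v)$ in the list preceding Lemma \ref{L:KL}, I identify
\[
\Phi_1(t, f, h) = -\B(f, h) v_+|_{\Gamma_0}, \qquad \Phi_2(t, f, h) = \B_1(f, h) v_+|_{\Gamma_1},
\]
where $v_+$ is the solution of the Dirichlet problem $\A(f, h) v_+ = 0$ in $\Omega_+$ with the two boundary traces prescribed inside $\kS(\cdot)$, so that $u_+ := v_+ \circ \phi_{f, h}^{-1}$ is harmonic in $\Omega(f, h)$. Unfolding the definitions of $\B$ and $\B_1$ and changing variables back through $\phi_{f, h}$, the two means translate into the surface integrals
\[
\int_\s \Phi_1(t, f, h)\, dx = -\frac{k}{\mu_+}\int_{\G(f)} \p_\nu u_+\, d\sigma, \qquad \int_\s \Phi_2(t, f, h)\, dx = -\frac{k}{\mu_+}\int_{\G(h)} \p_\nu u_+\, d\sigma,
\]
with $\nu$ the upward unit normal and $d\sigma$ the arc-length element on each interface. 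Since $\p \Omega(f, h) = \G(f) \cup \G(h)$ and the outward unit normal is $-\nu$ on $\G(f)$ and $+\nu$ on $\G(h)$, the divergence theorem applied to $u_+$ yields
\[
0 = \int_{\Omega(f, h)} \Delta u_+\, dA = -\int_{\G(f)} \p_\nu u_+\, d\sigma + \int_{\G(h)} \p_\nu u_+\, d\sigma,
\]
so the two means above coincide and the identity $\int_\s (\Phi_1 - \Phi_2)(t, f, h)\, dx = 0$ follows.

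The $h^{1+\alpha}(\s)$-regularity of $\Psi_2(f, \w f)$ is automatic: the mapping properties of $\B$, $\B_1$, $\kS$, $\T$ and $\kappa$ recorded in Section~3 and already exploited to state Theorem \ref{T:PHI-O} show that $\Phi_1, \Phi_2$, and therefore also $\Psi_2$, take values in $h^{1+\alpha}(\s)$. Combined with the mean-zero identity just derived, this places $\Psi_2(f, \w f) \in h^{1+\alpha}_0(\s)$ for $(f, \w f)$ in the neighbourhood $\w\W$, as claimed. The only mildly delicate point is keeping track of the signs and orientations in the definitions of $\B, \B_1$ and of the outward normals to $\G(f), \G(h)$; once those conventions are fixed, the argument collapses to a single application of the divergence theorem to the harmonic potential $u_+$.
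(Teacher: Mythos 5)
Your proof is correct and is in essence the paper's own argument: both reduce the mean-zero claim to a single application of the divergence theorem to the harmonic potential $u_+ = v_+\circ\phi_{f,h}^{-1}$ on $\Omega(f,h)$, using that the outward unit normal is $-\nu$ on $\G(f)$ and $+\nu$ on $\G(h)$ so that the two boundary contributions cancel. The only difference is organisational: the paper first rewrites $\Phi_1$ and $\Phi_2$ to express $\int_\s\Psi_2\,dx$ as a combination of terms of the form $\int_\s\B(f,h)\kS(f,h,p,r)\,dx + \int_\s\B_1(f,h)\kS(f,h,p,r)\,dx$ and proves that this vanishes for arbitrary boundary data $(p,r)$, whereas you apply the divergence theorem directly to the particular resolved $v_+$ once you observe the identifications $\Phi_1 = -\B(f,h)v_+$, $\Phi_2 = \B_1(f,h)v_+$ built into \eqref{eq:PHI1}--\eqref{eq:PHI2}. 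This is a minor streamlining, not a genuinely different route.
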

\begin{proof} Given $(f, \w f)$ as above,  let again $h=f-\w f$. 
Setting $p:=g\rho_+(1+h)-\gamma_d\kappa(h)$ and $r:=\tr_0\T_2(f)b+g(\rho_+-\rho_-)f+\gamma_w\kappa(f)$, we infer from \eqref{eq:PHI1}  that 
\[
\Phi_1(f,h)=-\cG^{-1}(f,h)\B(f,h)\kS(f,h,p,r),
\]
 which can be reformulated as follows
 \[
\Phi_1(f,h)=\B(f,h)\kS_2(f,h)\tr_0\T_1(f)\Phi_1(f,h)-\B(f,h)\kS(f,h,p,r).
\]
Using this relation  and \eqref{eq:PHI2}, we obtain
\begin{align*}
\int_{\s}\Psi_2(f,\w f)\, dx=&\int_{\s}\B(f,h)\kS(f,h,0,\tr_0\T_1(f)\Phi_1(f,h))\, dx\\[1ex]
&+\int_{\s}\B_1(f,h)\kS(f,h,0,\tr_0\T_1(f)\Phi_1(f,h))\, dx\\[1ex]
&-\int_{\s}\B(f,h)\kS(f,h,p,r) dx-\int_{\s}\B_1(f,h)\kS(f,h,p,r)\, dx.
\end{align*}
Therefore, in order to prove our claim, it will do if we show that 
\[
\int_{\s}\B(f,h)\kS(f,h,p,r) \,dx+\int_{\s}\B_1(f,h)\kS(f,h,p,r)\, dx=0
\]
for all $(f,h)\in\W$ and arbitrary $(p,r)\in (h^{2+\alpha}(\s))^2.$ 
Defining the  harmonic function  $u_+:=\kS(f,h,p,r)\circ\Phi_{f,h}^{-1}$, we then have
\begin{align*}
&\int_{\s}\B(f,h)\kS(f,h,p,r) dx+\int_{\s}\B_1(f,h)\kS(f,h,p,r)\, dx=-\frac{k}{\mu_+}\int_{\G(f)}\p_n u_+\, ds\\[1ex]
&\phantom{space=}-\frac{k}{\mu_+}\int_{\G(f,h)}\p_n u_+\, ds=\frac{k}{\mu_+}\int_{\0(f,h)}\Delta u_+\, d(x,y)=0,
\end{align*} 
where  $n$  stands for the outward unit normal at $\p\0(f,h),$ i.e. $n=\nu$ on $\G(f,h)$ and $n=-\nu$ on $\G(f).$
\end{proof}
\pagebreak

We come now to the proof of our second main result:
\begin{proof}[Proof of Theorem \ref{T:P2}] 
Studying the stability properties  of the trivial solution $(f,h)=(0,0)$ of \eqref{eq:S}, under the constrain \eqref{eq:cond3}, is equivalent to study the stability of the trivial solution
$(f,\w f)=(0,0)$ of \eqref{eq:Psi}, where, by Lemma \ref{L:Le1}, we have that
\[
\Psi:\w\W\to h^{1+\alpha}(\s)\times h^{1+\alpha}_0(\s).
\]
From \eqref{eq:symb} we know that every component of the matrix operator
\[
\p\Psi(0)=
\left[
\begin{array}{cc}
\p_f\Psi_1(0)&\p_{\w f}\Psi_1(0)\\[1ex]
\p_f\Psi_2(0)&\p_{\w f}\Psi_2(0)
\end{array}
\right] 
\]
is a  multiplier with symbol $(\w\lambda^f_1(m))_{m\in\Z}$, $(\w\lambda^{\w f}_1(m))_{m\in\Z\setminus\{0\}},$
$(\w\lambda^f_2(m))_{m\in\Z}$, and $(\w\lambda^{\w f}_2(m))_{m\in\Z\setminus\{0\}}$ respectively, given by
\begin{eqnarray*}
&\w\lambda^f_1(m)=\lambda_1^f(m)+\lambda_1^h(m),&\w\lambda^{\w f}_1(m)=-\lambda_1^h(m),\\[1ex]
&\w \lambda^f_2(m)=\lambda_1^f(m)+\lambda_1^h(m)-\lambda_2^f(m)-\lambda_2^h(m),& \w\lambda^{\w f}_2(m)=-\lambda_1^h(m)+\lambda_2^h(m)
\end{eqnarray*}  
for $m\neq0$,  $\w\lambda^f_1(0)=-kg\rho_-(\mu_++\mu_-)^{-1}$, and $\w\lambda^f_2(0)=0$.
A simple computation shows that the spectrum of the Fr\'echet derivative $\p\Psi(0)$, which coincides with its point spectrum from the same reason \cite[Theorem III.8.29]{K},
is given by
\[
\sigma(\p\Psi(0))=\{\Lambda_\pm(m)\, :\, 1\leq m\in\N\}\cup\{-kg\rho_-(\mu_++\mu_-)^{-1}\}.
\]

Let us start and estimate  the eigenvalues  $\Lambda_\pm(m)$ with $m\in\N.$
Since $[x\mapsto x/\tanh(x)]$ is increasing on $[0,\infty)$ we conclude that
\begin{equation}\label{eq:est1}
\Lambda_-(m)\leq\lambda_1^f(1)/2<0,\quad\forall m\geq1.
\end{equation}
Let now $g(\rho_--\rho_+)+\gamma_w>0.$
 We show that in this case also the other eigenvalues $\Lambda_+(m)$ are of negative sign.
 Indeed, it holds that
 \begin{equation}\label{eq:est2}
\Lambda_+(m)=-4\frac{\lambda_1^f(m)\lambda_2^h(m)-\lambda_1^h(m)\lambda_2^f(m)}{-(\lambda_1^f(m)+\lambda_2^h(m))+\sqrt{(\lambda^f_1(m)-\lambda^h _2(m))^2+4\lambda^h_1(m)\lambda_2^f(m)}},
\end{equation}
and, in view of
\begin{equation}\label{eq:est3}
\lambda_1^f(m)\lambda_2^h(m)-\lambda_1^h(m)\lambda_2^f(m)=\frac{k^2m^2(g\rho_++\gamma_d m^2)[g(\rho_--\rho_+)+\gamma_w m^2]}{\mu_+(\mu_++\mu_-)},
\end{equation}
we conclude that the spectrum of $\p\Psi(0)$ is bounded  away from the negative half axis in $\R^2.$ 
The assertion stated in Theorem \ref{T:P2} $(i)$ follows at once, with the remark that the constant $\omega$ found there may be chosen arbitrarily in the set $(0, -\max\{\lambda\,:\,\lambda\in\sigma(\p\Psi(0)\}).$

On the other hand, if $g(\rho_--\rho_+)+\gamma_w<0,$ one can easily  observe that $\Lambda_+(1)>0.$
Moreover,  since   $\Lambda_\pm(m)\to-\infty$ as $m\to\infty$, we  conclude Theorem \ref{T:P2} $(ii).$
This finishes the proof.
\end{proof}

\section{Finger-shaped equilibria}
This last section is dedicated entirely to the proof of Theorem \ref{T:P3}.
If the tupel $(f, h, u_+,u_-)$ is a stationary solution of \eqref{eq:S}, it must hold that 
$u_-=b,$ $u_+$ is constant, and 
\begin{align}\label{eq:Fi1}
&\gamma_w\kappa(f)+g(\rho_+-\rho_-)f=u_+-b\\[1ex] 
\label{eq:Fi2}
&\gamma_d \kappa(h)-g\rho_+h =b-u_+.
\end{align}
on $\s.$
Equations \eqref{eq:Fi1} and \eqref{eq:Fi2}, called  Laplace-Young or capillarity equations, have  been studied intensively (see \cite{RF} and the literature therein)
subjected to certain constrains at a fixed rigid boundary. 
Though, when dealing with periodic solutions,  we easily get, cf. \cite{EM5}, that $h$ must be constant also in the spatial variable, and if $\rho_+\leq \rho_-$ then also $f$ is constant.
Whence, equations \eqref{eq:Fi1} and \eqref{eq:Fi2} may have nontrivial solutions $(f,h)\notin\R^2$ only when $\gamma_w>0$ and  $\rho_+> \rho_-$.

We are interested to determine only the steady-states $(f,h)$ of \eqref{eq:S} when $b=g\rho_+$ and  which   satisfy \eqref{eq:cond3}, i.e.
$\0(f,h) $ encloses the same volume of fluid as $\0_+.$
Assume by contradiction that $h=c$ for some $c\neq0.$
Since $b-u_+=-g\rho_+c,$ we get that
\[ 
\gamma_w\kappa(f)+g(\rho_+-\rho_-)f=g\rho_+c.
\]
On one hand, if $f$ is constant, it must hold that $f=\rho_+c/(\rho_+-\rho_-),$ which contradicts \eqref{eq:cond3} for $c\neq0.$
On the other hand, the function $p:=f-\rho_+c/(\rho_+-\rho_-)$ solves the equation
\[ 
\gamma_w\frac{p''}{(1+p'^2)^{3/2}}+g(\rho_+-\rho_-)p=0.
\]
The solutions of this equation are, up to a translation, odd. 
Indeed, since $p$ is periodic and nonconstant, it must hold $p(x_0)=0$ for some $x_0\in\R.$
By translation, we may take $x_0=0.$
The pair $(p,q:=p')$ is a global solution of the initial value problem
\begin{equation*}
\left(
\begin{array}{c}
p\\[1ex]
q
\end{array}
\right)'
=
\left(
\begin{array}{c}
q\\[1ex]
-\displaystyle\frac{g(\rho_+-\rho_-)}{\gamma_w} p(1+q^2)^{3/2}
\end{array}
\right),\quad
\left(
\begin{array}{c}
p\\
q
\end{array}
\right)(0)=
\left(
\begin{array}{c}
0\\
p'(0)
\end{array}
\right).
\end{equation*}
This is also true for $(\w p,\w q)(x):=(-p(-x), q(-x))$, $x\in\R.$
Whence, as we claimed, $p$ is odd, so that
$\int_{\s}f\, dx=\rho_+c/(\rho_+-\rho_-)$, which contradicts again $c\neq0$ and \eqref{eq:cond3}.
Consequently, if $(\gamma_w,f,h) $ is a solution of \eqref{eq:S} and \eqref{eq:cond3}, then $h=0$ and $(\gamma_w,f)$ 
solves the problem \eqref{eq:FiF1}, which implies in turn that $f$ has integral mean equal to $0$ and  an  even translation by \cite[Theorem 3.2]{EEM3}.
Combining that particular result with Theorem 6.1 in  \cite{EM12} we obtain all the claims of Theorem \ref{T:P3} excepting the stability assertion.

In the remaining part of this section we prove that the steady-state solution $(\gamma_l(\e), f_l(\e),0)$ of problem \eqref{eq:S} 
is unstable provided that $\e$ is sufficiently small.   
We rediscover first the global branches $(\gamma_l,f_l,0),$ $1\leq l\in\N,$ at least locally near $(\ov\gamma_l,0,0),$
by applying the theorem on bifurcations from simple eigenvalues, due to Crandall and Rabinowitz \cite[Theorem 1.7]{CR}, to the problem
\[
\Psi(\gamma_w,f,\w f)=0,
\]
where $\Psi$ is the mapping defined by \eqref{eq:PSI}.
We shall refer to $(\gamma_w,0,0)$, $\gamma_w>0,$ as being a trivial solution of \eqref{eq:Psi}.
In order to use $\gamma_w$ as a bifurcation argument we establish first analytic dependence of $\Psi$ on $\gamma_w$ 
and take the restriction
\begin{equation}\label{eq:restr}
\Psi:(0,\infty)\times\w \W_{e} \subset h^{4+\alpha}_{e}(\s)\times h^{2+2\sign(\gamma_d)+\alpha}_{0,e}(\s)\to h^{1+\alpha}_{e}(\s)\times h^{1+\alpha}_{0,e}(\s),
\end{equation}
where $\w\W_e:=\w\W\cap (h^{4+\alpha}_{e}(\s)\times h^{2+2\sign(\gamma_d)+\alpha}_{0,e}(\s)).$
That $\Psi$ is well-defined between these spaces follows by using elliptic maximum principles and Lemma \ref{L:Le1}.
When $\gamma_w\notin\{\ov\gamma_l\,:\, 1\leq l\in\N\}$, we infer from \eqref{eq:est1}-\eqref{eq:est3} that all the eigenvalues of $\p_{(f,\w f)}\Psi(\gamma_w,0)$ are different from zero, thus $\p_{(f,\w f)}\Psi(\gamma_w,0)$ is an isomorphism.
The implicit function theorem ensures that $(\gamma_w,0,0) $ is not a bifurcation point
of the trivial solution.
Otherwise, if $\gamma_w=\ov\gamma_l$ for some $l\in\N,$
then 
\begin{align*}
\Kern \p_{(f,\w f)}\Psi(\ov\gamma_l,0)=\spa\{(\cos(lx),\cos(lx))\},
\end{align*}
whereby $(\cos(lx),\cos(lx))$ is the eigenvector corresponding to the eigenvalue $\Lambda_+(l,\ov\gamma_l)=0.$
We considered that $\Lambda_\pm=\Lambda_\pm(m,\gamma_w),$  i.e. $\Lambda_\pm$ depends not only on $m$, but also on $\gamma_w.$ 
Also, the codimension of the image $\im \p_{(f,\w f)}\Psi(\ov\gamma_l,0)$ is one
since 
\[
\left(\frac{a_0}{2}+\sum_{m=1}^\infty a_m\cos(mx),\sum_{m=1}^\infty b_m\cos(mx)\right)\in h^{1+\alpha}_{e}(\s)\times h^{1+\alpha}_{0,e}(\s)
\]
belongs to $\im \p_{(f,\w f)}\Psi(\ov\gamma_l,0)$ if and only if
\[
\left(
a_l, b_l
\right)
=\lambda\left(
1, -\displaystyle\frac{\mu_+(\cosh^2(l)-\cosh(l))+\mu_-(\cosh^2(l)-1)}{\mu_+\cosh(l)}
\right)
\]
for some $\lambda\in\R.$
Moreover, one can easily check that the mixed derivative
\[
\p_{\gamma_w,(f,\w f)}\Psi(\gamma_w,0)(\cos(lx),\cos(lx))=\lambda
\left(
\cos(lx), \frac{1-\cosh(l)}{\cosh(l)}\cos(lx)\right),
\]
whereby $\lambda=-2kl^2\ov\gamma_l\left((\mu_++\mu_-)\sinh(l)\right)^{-1},$  does not belong to the image of $ \p_{(f,\w f)}\Psi(\ov\gamma_l,0)$. 
We conclude by \cite[Theorem 1.7]{CR} the existence of a bifurcation curve
\[
(\gamma_l,f_l,\w f_l):(-\delta_l,\delta_l)\to(0,\infty)\times h^{4+\alpha}_{e}(\s)\times h^{2+2\sign(\gamma_d)+\alpha}_{0,e}(\s)
\]
consisting only of stationary solutions of \eqref{eq:Psi}.
Since they correspond all to a volume of fluid $+$ equal to $2\pi,$ it follows  that $\gamma_l$ and  $\w f_l=f_l$ are, up to a parametrisation, restrictions of the functions obtained in Theorem \ref{T:P3}.

The stability properties of the equilibrium $(\gamma_l(\e), f_l(\e),0) $ for \eqref{eq:S} under the constrain \eqref{eq:cond3}, 
are  equivalent with that of  the steady-state solution $(\gamma_l(\e),f_l(\e),\w f_l(\e))$ of problem \eqref{eq:Psi}.
For our purposes, Theorem \ref{T:P3}, it suffices in fact to show that $(\gamma_l(\e),f_l(\e),\w f_l(\e))$ is an unstable stationary solution of the abstract Cauchy problem
\begin{equation}\label{eq:CPR}
\p_tX=\Psi(\gamma_w,X),\qquad X(0)=X_0,
\end{equation}
where $\Psi$ is the restriction \eqref{eq:restr}.

Indeed, if $\e$ small and $l\geq 2$, then $(\gamma_l(\e),f_l(\e),\w f_l(\e))$  is an unstable solution of \eqref{eq:CPR} since  the eigenvalue 
$\Lambda_+(1,\ov\gamma_l)$ of $\p_{(f,\w f)}\Psi(\ov\gamma_l,0)$ is positive.

For the stability of the stationary  solution $(\gamma_1(\e),f_1(\e),\w f_1(\e))$,  when $ |\e| $ is small and $|\e|\neq0$, it is important how the eigenvalue $\Lambda_+(1,\ov\gamma_1)$ 
(which is  equal to $0$)
perturbs for small $\e.$
Our main tool  is the exchange of stability theorem  \cite[Theorem 1.16]{CR3} due to Crandall and Rabinowitz. 
The assumptions of this theorem are satisfied  by $\Psi$ since: 
\begin{itemize}
\item[$(a)$] $\p_{(f,\w f)}\Psi(\ov\gamma_1,0)$ is a Fredholm operator of index 0 with a one-dimension\-al kernel;
\item[$(b)$] $\p_{\gamma_w,(f,\w f)}\Psi(\ov\gamma_1,0)(\cos(x),\cos(x))\notin\im\p_{(f,\w f)}\Psi(\ov\gamma_1,0)$;
\item[$(c)$] $(\cos(x),\cos(x))\notin\im\p_{(f,\w f)}\Psi(\ov\gamma_1,0).$ 
 \end{itemize}
Letting $\mathcal{J}$ denote the inclusion 
\[
h^{4+\alpha}_{e}(\s)\times h^{2+2\sign(\gamma_d)+\alpha}_{0,e}(\s)\hookrightarrow  h^{1+\alpha}_{e}(\s)\times h^{1+\alpha}_{0,e}(\s),
\]
 in the terminology of \cite{CR3}, $(a)$,  $(b)$, and $(c)$ mean that $0$ is a $\p_{\gamma_w,(f,\w f)}\Psi(\ov\gamma_1,0)$-simple eigenvalue and a $\mathcal{J}$-simple eigenvalue
 of $\p_{(f,\w f)}\Psi(\ov\gamma_1,0)$.
 By choosing $\delta_1$ sufficiently small, we obtain from \cite[Theorem 1.16]{CR3} four continuously differentiable  functions $\lambda:(\ov\gamma_1-\delta_1,\ov\gamma_1+\delta_1)\to\R$, $\mu:(-\delta_1,\delta_1)\to\R$,  $u:(\ov\gamma_1-\delta_1,\ov\gamma_1+\delta_1)\to h^{4+\alpha}_{e}(\s)\times h^{2+2\sign(\gamma_d)+\alpha}_{0,e}(\s),$ and $v:(-\delta_1,\delta_1)\to h^{4+\alpha}_{e}(\s)\times h^{2+2\sign(\gamma_d)+\alpha}_{0,e}(\s)$ such that:
 \begin{align*}
 &\text{$\p_{(f,\w f)}\Psi(\gamma_w,0)u(\gamma_w)=\lambda(\gamma_w)u(\gamma_w)$  for $\gamma_w\in(\ov\gamma_1-\delta_1,\ov\gamma_1+\delta_1)$},\\[1ex]
 &\text{$\p_{(f,\w f)}\Psi(\ov\gamma_1(\e),f_1(\e),\w f_1(\e))w(\e)=\mu(\e)w(\e)$  for $\e\in(-\delta_1,\delta_1)$},\\[1ex]
 &\text{$\lambda(\ov\gamma_1)=\mu(0)=0$, and $u(\ov\gamma_1)=w(0)=(\cos(x),\cos(x)).$}
\end{align*}
Moreover, $\lambda'(\ov\gamma_1)\neq 0$ and 
\[
 \lim_{\e\to0, \mu(\e)\neq0}\frac{-\e\gamma_1'(\e)\lambda'(\ov\gamma_1)}{\mu(\e)}=1.
\]
Since $\lambda(\gamma_w)$ is an eigenvalue of $\p_{(f,\w f)}\Psi(\gamma_w,0)$ and $\lambda(\ov\gamma_1)=0$ we get, by continuity, that $\lambda(\gamma_w)=\Lambda_+(1,\gamma_w)$ for all $|\gamma_w-\ov\gamma_1|<\delta_1.$
Moreover, $\Lambda_+(1,\gamma_w)$ is positive for $\gamma_w<\ov\gamma_1$, and negative  if $\gamma_w>\ov\gamma_1,$ thus $\lambda'(\ov\gamma_1)< 0.$
In order to determine the sign of the eigenvalue $\mu(\e),$ which is the perturbation of the eigenvalue  $0$ of $\p_{(f,\w f)}\Psi(\ov\gamma_1,0)$,
we need to specify the sign of $\gamma_1'(\e)$.
From Theorem \ref{T:P3} we obtain in view of $\gamma_1'(0)=0$ and $\gamma_1''(0)>0$, that $\gamma_1'(\e)$ and $\e$ have the same sign, thus $\mu(\e)$ is a positive eigenvalue, and we are done by
\cite[Theorem 9.1.3]{L}.

\section{Appendix}
We end this paper with a detailed proof of Theorem \ref{T:PHI-O}.
Since the diffeomorphisms used in Section 3 to transform the original problem \eqref{eq:S} into \eqref{eq:TS} are given explicitly in terms of $f$ and $h$, we obtain by direct computation the following
expressions for the elliptic  and trace operators defined  therein: 
\begin{align*}
\A(f)=&\frac{\p^2 }{\p x^2}-2\frac{(1+y)f'}{1+ f}\frac{\p^2 }{\p x\p y}+
\left(\frac{(1+y)^2f'^2}{(1+f)^2}+\frac{1}{(1+f)^2}\right)\frac{\p^2 }{\p y^2}+\\[1ex]
&-(1+y)\frac{(1+f)f''- 2f'^2}{(1+f)^2}\frac{\p }{\p y},\\[1ex]
\A(f,h)=&\frac{\p^2 }{\p x^2}-2\frac{yh'+(1-y)f'}{1+ h-f}\frac{\p^2 }{\p x\p y}+\frac{(yh'+(1-y)f')^2+1}{(1+h-f)^2}\frac{\p^2 }{\p y^2}\\[1ex]
&-\left(\frac{yh''+(1-y)f''}{1+h-f}-2\frac{(h'-f')(yh'+(1-y)f')}{(1+h-f)^2}\right)\frac{\p }{\p y},\\[1ex]
\B(f)=&\frac{k}{\mu_-}\left(\frac{1+f'^2}{1+ f}\tr_0 \frac{\p }{\p y}-f'\tr_0 \frac{\p }{\p x}\right),\\[1ex]
\B(f,h)=&\frac{k}{\mu_+}\left(\frac{1+f'^2}{1+h- f}\tr_0 \frac{\p }{\p y}-f'\tr_0 \frac{\p }{\p x}\right),\\[1ex]
\B_1(f,h)=&-\frac{k}{\mu_+}\left(\frac{1+h'^2}{1+h- f}\tr_1 \frac{\p }{\p y}-h'\tr_1 \frac{\p }{\p x}\right).
\end{align*}
In the following we take $b(0)=c\in\R$ and show that  $\p_f\Phi_i(0)$ and $ \p_h\Phi_i(0),$ $i=1,2,$
are Fourier multiplication operators.
We shall also determine the symbol of these operators which are the main ingredients when proving the Theorems \ref{T:P1} and \ref{T:P2}. 

\subsection{The derivative $\p_f\Phi_1(0)$}
Let us start  by determining the symbol of  the operator $\p_f\Phi_1(0)$.
Putting    $h=0$ in \eqref{eq:PHI1} yields that
\begin{align*}
\Phi_1(0,f,0):=&-\cG^{-1}(f,0)\B(f,0)\kS_2(f,0)\tr_0\T_2(f)c\\[1ex]
&-\cG^{-1}(f,0)\B(f,0)\kS_2(f,0)[g(\rho_+-\rho_-) f+\gamma_{w}\kappa(f)]\\[1ex]
&-\cG^{-1}(f,0)\B(f,0)\kS_1(f,0)g\rho_+.
\end{align*}
We show first that the derivative of $\cF:=\cG^{-1}$ with respect to $f$ in $0$ is a Fourier multiplier.
Indeed, by the chain rule we have, due to $\cF(f,0)\cG(f,0)=\id_{h^{1+\alpha}(\s)},$  that
$\p_f\cF(0,0)[f]M=-\cG^{-1}(0,0)\p_f\cG(0,0)[f]\cG^{-1}(0,0)M$
for all $f\in h^{2+2\sign(\gamma_w)+\alpha}(\s)$ and $M\in\R$.
So, we need to determine 
\begin{equation}\label{eq:P_fG}
\begin{aligned}
\p_f\cG(0,0)[f]M=&-\p_f\B(0,0)[f]\kS_2(0,0)\tr_0\T_1(0)M\\[1ex]
&-\B(0,0)\p_f\kS_2(0,0)[f]\tr_0\T_1(0)M\\[1ex]
&-\B(0,0)\kS_2(0,0)\tr_0\p\T_1(0)[f]M.
\end{aligned}
\end{equation}
Using the expansions found in the proof of Lemma \ref{L:KL}, we find that
$\T_1(0)M=\mu_-k^{-1}M(1+y)$ and $\kS_2(0,0)\tr_0\T_1(0)M=\mu_-k^{-1}M(1-y),$
hence
\begin{equation}\label{eq:P1T2-1}
-\B(0,0)\kS_2(0,0)\tr_0\p\T_1(0)[f]M=\frac{M\mu_-}{\mu_+}f. 
\end{equation}
Concerning the first two terms  of \eqref{eq:P_fG} we determine first an expansion for $\p\T_1(0)[f]M$ and $\p_f\kS_2(0,0)[f]M.$
We start with $\p_f\kS_2(0,0)[f]M.$ 
Elliptic estimates yield  that the function
$\p_f\kS_2(0,0)[f]M$ is the solution of Dirichlet problem
\begin{equation*}
\left\{
\begin{array}{rlllll}
\Delta w&=&-\p_f\A(0,0)[f]\kS_2(0,0)M=-M(1-y)f''&\text{in}& \Omega_+,\\[1ex]
w&=&0&\text{on}& \Gamma_1,\\[1ex]
 w&=&0&\text{on}&\Gamma_{0},
\end{array}
\right.
\end{equation*} 
and, as in the proof of Lemma \ref{L:KL}, we get that
\[
\p_f\kS_2(0,0)[f]M=-\sum_{m\in\Z\setminus\{0\}}M \left(\frac{e^{my}-e^{2m}e^{-my}}{e^{2m}-1}+(1-y)\right)\wh f(m)e^{imx}.
\]
Whence
\begin{equation}\label{eq:P1T2-2}
-\B(0,0)\p_f\kS_2(0,0)[f]\tr_0\T_1(0)M=\frac{M\mu_-}{\mu_+}\sum_{m\in\Z}\frac{m}{\tanh(m)}\wh f(m) e^{imx}-\frac{M\mu_-}{\mu_+}f.
\end{equation}

Consider now the function $\p\T_1(0)[f]M$, which  is the solution of 
\begin{equation*}
\left\{
\begin{array}{rlllll}
\Delta w&=&-\p\A(0)[f]\T_1(0)M=\mu_-k^{-1}M(1+y)f''&\text{in}& \Omega_-,\\[1ex]
\p_yw&=&-\mu_-k^{-1}\p\B(0)[f]\T_1(0)M=\mu_-k^{-1}Mf&\text{on}& \Gamma_0,\\[1ex]
 w&=&0&\text{on}&\Gamma_{-1}.
\end{array}
\right.
\end{equation*}  
Expanding
 \[
\text{$f=\sum_{m\in\Z}\wh f(m) e^{imx }$ and $w(x,y)=\sum_{m\in\Z} w_m(y)e^{imx},$}
\]
we find  that $w_m$ is the solution of the following problem
 \begin{equation*}
\left\{
\begin{array}{rlllll}
 w_m''-m^2w_m&=&-M\mu_-k^{-1}\wh f(m) m^2 (1+y)&\text{in}& -1<y<0,\\[1ex]
w_m'(0)&=&M\mu_-k^{-1} \wh f(m),\\[1ex]
 w_m(-1)&=&0,
\end{array}
\right.
\end{equation*} 
which has the solution $w_m(y)=M\mu_-k^{-1}\wh f(m)(1+y)$ for all $m\in\Z$.
Whence
\[
\p\T_1(0)[f]M=M\mu_-k^{-1}(1+y)f,
\]
and, with the convention that $m/\tanh(m)=1$ if $m=0$, we determine that
\begin{equation}\label{eq:P1T2-3}
-\B(0,0)\kS_2(0,0)\tr_0\p\T_1(0)[f]M=\frac{M\mu_-}{\mu_+}\sum_{m\in\Z}\frac{m}{\tanh (m)}\wh f(m)e^{imx}.
\end{equation}
The relations \eqref{eq:P_fG}-\eqref{eq:P1T2-3} fuse, in view of Lemma \ref{L:KL}, to
\begin{equation}\label{eq:A1}
\p_f\cF(0,0)M=-\frac{2M\mu_-\mu_+}{(\mu_-+\mu_+)^2}\sum_{m\in\Z}\frac{m}{\tanh (m)}\wh f(m)e^{imx}
\end{equation}
for all $f\in h^{2+2\sign(\gamma_w)+\alpha}(\s) $ and $M\in\R.$
In order to determine the derivative $\p_f\Phi_1(0) $  two more steps must be done: we must find the expansions
of the derivatives $\p_f\T_2(0)$ and  $\p_f\kS_1(0,0).$
Since $\T_2(f)M=M$, we get that $\p\T_2(0)[f]M=0,$ and
proceeding similarly as we did before, yields 
\[
\p_f\kS_1(0,0)[f]M=\sum_{m\in\Z\setminus\{0\}}M \left(\frac{e^{my}-e^{2m}e^{-my}}{e^{2m}-1}+(1-y)\right)\wh f(m)e^{imx}.
\]
Combining all these relations, we finally obtain for $\p_f\Phi_1(0,0)$ that

\begin{equation}\label{eq:P1f}
\p_f\Phi_1(0)[f]=\sum_{m\in\Z}\lambda^f_1(m)\wh f(m)e^{imx},
\end{equation}
whereby $(\lambda_1^f(m))_{m\in\Z}$ is given by \eqref{eq:coef_1_f}.

\subsection{The derivative $\p_f\Phi_2(0)$}
The expansions found in the previous subsection are very useful when studying $\p_f\Phi_2(0,0)$, since by \eqref{eq:PHI2}
we have
\begin{equation*}
\begin{aligned}
\Phi_2(0,f,0):=&B_1(f,0)\kS_1(f,0)g\rho_+-B_1(f,0)\kS_2(f,0)\tr_0\T_1(f)\Phi_1(0,f,0)\\[1ex]
&+B_1(f,0)\kS_2(f,0)\tr_0\T_2(f)c\\[1ex]
&+B_1(f,0)\kS_2(f,0)[\gamma_{w}\kappa(f)+g(\rho_+-\rho_-) f].
\end{aligned}
\end{equation*}
In view of $\Phi_1(0)=k(c-g\rho_+)/(\mu_++\mu_-) $ we obtain the following representation
\begin{equation}\label{eq:P2f}
\p_f\Phi_2(0)[f]=\sum_{m\in\Z}\lambda^f_2(m)\wh f(m)e^{imx},
\end{equation}
with symbol $(\lambda^f_2(m))_{m\in\Z}$ given by    \eqref{eq:coef_2_f}.

\subsection{The derivative $\p_h\Phi_1(0)$}
In order to determine a representation of the Fr\'echet derivative $\p_h\Phi_1(0)$ we have to 
investigate first the partial derivatives with respect to $h$ of the solution operators defined in Section 3.
By \eqref{eq:PHI1}, we get for $f=0$  that
\begin{align*}
\Phi_1(0,0,h):=&-\cG^{-1}(0,h)\B(0,h)\kS_2(0,h)\tr_0\T_2(0)c\\[1ex]
&-\cG^{-1}(0,h)\B(0,h)\kS_1(0,h)[g\rho_+(1+h)-\gamma_{d}\kappa(h)].
\end{align*}
Differentiating the relation $\cF(0,h)\cG(0,h)=\id_{h^{1+\alpha}(\s)}$, yields at $h=0$    that
 \[
\p_h\cF(0,0)[h]M=-\cG^{-1}(0,0)\p_h\cG(0,0)[h]\cG^{-1}(0,0)M
\]
for all $M\in\R.$
We infer from the definition of $\cG$ that 
\begin{align*}
\p_h\cG(0,0)[h]=-\p_h\B(0,0)[h]\kS_2(0,0)\tr_0\T_1(0)-\B(0,0)\p_h\kS_2(0,0)[h]\tr_0\T_1(0).
\end{align*}
Given $h\in h^{2+2\sign(\gamma_d)+\alpha}(\s),$ the partial derivative $\p_h\kS_2(0,0)[h]M$ is the solution of the linear Dirichlet problem
\begin{equation*}
\left\{
\begin{array}{rlllll}
\Delta w&=&-\p_h\A(0,0)[h]\kS_2(0,0)M=-Myh''&\text{in}& \Omega_+,\\[1ex]
w&=&0&\text{on}& \Gamma_1,\\[1ex]
 w&=&0&\text{on}&\Gamma_{0},
\end{array}
\right.
\end{equation*}
thus $\p_h\kS_2(0,0)[h]M$ expands as follows
\[
\p_h\kS_2(0,0)[h]M=\sum_{m\in\Z\setminus\{0\}}M\left(\frac{e^{my}-e^{-my}}{e^{m}-e^{-m}}-y\right)\wh h(m)e^{imx},
\]
and similarly
\[
\p_h\kS_1(0,0)[h]M=-\sum_{m\in\Z\setminus\{0\}}M\left(\frac{e^{my}-e^{-my}}{e^{m}-e^{-m}}-y\right)\wh h(m)e^{imx}.
\]
It follows then easily that
\begin{align*}
\p_h\cF(0,0)[h]M=\frac{\mu_-\mu_+M}{(\mu_++\mu_-)^2}\sum_{m\in\Z}\frac{m}{\sinh(m)}\wh h(m)e^{imx}.
\end{align*}
By definition, $\kS_1(0,0)p=(\Delta, \tr,\tr)^{-1}(0,p,0)$ for all $p\in h^{2+\alpha}(\s)$,   
and one can easily check that if $p=\sum_{m\in\Z}\wh p(m)e^{imx}$, then 
\[
\kS_1(0,0)p=y\wh p(0)+\sum_{m\in\Z\setminus\{0\}}\frac{e^{my}-e^{-my}}{e^m-e^{-m}}\wh p(m)e^{imx}.
\]
Summarising, we obtain that
\begin{equation}\label{eq:P1h}
\p_h\Phi_1(0)[h]=\sum_{m\in\Z}\lambda^h_1(m)\wh h(m)e^{imx},
\end{equation}
with symbol $(\lambda_1^h(m))_{m\in\Z}$ given by relation \eqref{eq:coef_1_h}.

\subsection{The derivative $\p_h\Phi_2(0,0)$}
Putting $f=0$ in \eqref{eq:PHI2} yields
\begin{equation*}
\begin{aligned}
\Phi_2(0,h,0)=&B_1(0,h)\kS_1(0,h)[g\rho_+(1+h)-\gamma_{d}\kappa(h)]\\[1ex]
&-B_1(0,h)\kS_2(0,h)\tr_0\T_1(0)\Phi_1(0,h,0)\\[1ex]
&+B_1(0,h)\kS_2(0,h)\tr_0\T_2(0)c.
\end{aligned}
\end{equation*}
Using the relations  derived above, finally yields 
\begin{equation}\label{eq:P2h}
\p_h\Phi_2(0)[h]=\sum_{m\in\Z}\lambda^h_2(m)\wh h(m)e^{imx},
\end{equation}
with symbol $(\lambda_2^h(m))_{m\in\Z}$ given by relation \eqref{eq:coef_2_h}.

\end{document}